\numberwithin{equation}{section}
 \theoremstyle{plain}
\newtheorem{theorem}{Theorem}[section]
\newtheorem{lemma}[theorem]{Lemma}
\newtheorem{definition}[theorem]{Definition}
\newcommand{\no}{\nonumber}
\begin{document}

\begin{center}
{\Large\bf Congruences modulo powers of $5$ for odd ranks}
\end{center}

\begin{center}

Renrong Mao  and Zhiqian Zhou

Department of Mathematics,\\
 Soochow University, \\
 Suzhou, 215006,
People's Republic of China\\[6pt]

Email: rrmao@suda.edu.cn,
  20217907002@stu.suda.edu.cn

\end{center}

\noindent {\bf Abstract.} 	In 2007, Andrews studied the odd Durfee symbols and their odd ranks.
Let $N^0(m,k,n)$ denote the number of odd Durfee symbols of $n$ with odd rank congruent to $m$ modulo $k$. 
Motivated by Andrews' work, many authors obtained generating functions of $N^0(m,k,n)$ from which relations between odd ranks are proved.
In this paper, we establish a family of congruences for odd ranks modulo powers of $5$.

\noindent {\bf Keywords:}
partitions, odd Durfee symbols, odd ranks, congruences, modular functions

\noindent {\bf AMS Subject Classification:} 05A17, 11F30, 11F37, 11P83.

        \allowdisplaybreaks

	\section{Introduction}
A partition of a positive integer $n$ is a sequence of non-increasing positive integers whose sum equals $n$.
Ramanujan \cite{ra1} proved the three famous congruences:
\begin{align}
	p(5n+4)&\equiv 0\pmod 5\label {1i1}\\
	p(7n+5)&\equiv 0\pmod 7\label {1i2}\\
	p(11n+6)&\equiv 0\pmod{11}\no,
\end{align}
where 
$p(n)$ denote the number of partitions of $n$.
Moreover, he also conjectured that, if $\alpha\ge 1$ and $\delta_\alpha$ is the reciprocal modulo $5^\alpha$ of 24,  then 
\begin{align}
	p(5^\alpha +\delta_\alpha) \equiv 0\pmod {5^\alpha}.\label{pnmod5}
\end{align}
G. N. Watson \cite{watson1938ramanujans} first proved Ramanujan's conjecture by applying the modular equation of fifth order. 
In \cite{hh},  M. D. Hirschhorn and D. C. Hunt provided a more elementary proof of \eqref{pnmod5}. On the other hand, to provide combinatorial interpretations for Ramanujan’s congruences \eqref{1i1}-\eqref{1i2},
Dyson \cite{dyson} defined the rank of a partition to be the largest part minus the number
of parts. Denote the number of partitions of $n$ with rank congruent to $m$ modulo $k$ by $N(m,k,n)$. Then he conjectured that
\begin{align*}
	N(m,5,5n+4)&=\frac{p(5n+4)}{5}
	\intertext{for all $0\leq m\leq 4$ and for $0\leq j\leq 6$}
	N(j,7,7n+5)&=\frac{p(7n+5)}{7}.
\end{align*}
Dyson's conjecture was proved 
by Atkin and Swinnerton-Dyer \cite{atkin}.
Recently, D. Chen, R. Chen and Garvan \cite{cc} established congruences for ranks of partitions.
Let $$ a_f(n):=N(0,2,n)-N(1,2,n).$$ Then they proved that,
for all $\alpha \geq 3$ and all $n \geq 0$,
$$
a_f\left(5^\alpha n+\delta_\alpha\right)+a_f\left(5^{\alpha-2} n+\delta_{\alpha-2}\right) \equiv 0 \quad\left(\bmod 5^{\left\lfloor\frac{1}{2} \alpha\right\rfloor}\right),
$$
where $\delta_\alpha$ satisfies $0<\delta_\alpha<5^\alpha$ and $24 \delta_\alpha \equiv 1\left(\bmod 5^\alpha\right)$.
In this paper, we obtain congruences for odd ranks.

To study the partition-theoretic interpretation of Watson's the third order mock theta function $\omega(q)$ \cite{wat}, Andrews \cite{andrews} defined the odd Durfee symbol.

\allowdisplaybreaks
\begin{definition}\label{defodd}
	An odd Durfee symbol of $n$ is a two-rowed array with a subscript of the form
	\begin{align*}
		\begin{pmatrix}
			a_{1}& a_{2}& \cdots& a_{s}\\
			b_{1}& b_{2}& \cdots& b_{t}
		\end{pmatrix}_{D}
	\end{align*}
	wherein all entries are odd numbers such that
	\begin{enumerate}[(1)]
		\item $2D+1\geq a_{1}\geq a_{2}\geq \cdots \geq a_{s}\ge 0$;
		\item $2D+1\geq b_{1}\geq b_{2}\geq \cdots \geq b_{t}\ge 0$;		
		\item $n=\sum_{i=1}^{s}a_{i}+\sum_{j=1}^{t}b_{j}+2D^{2}+2D+1$.
	\end{enumerate}
\end{definition}
We remark that
Definition \ref{defodd} is made by Ji \cite{ji} and is equivalent to Andrews' original definition.
Andrews also defined the odd rank of an odd Durfee symbol
to be the number of entries in the top row minus the number of entries in the bottom row. 
Motivated by Andrews' work, odd ranks are widely studied by many authors.
Let $N^0(m,k,n)$ denote the number of odd Durfee symbols of $n$ with odd rank congruent to $m$ modulo $k$.
Wang \cite{wang1} established identities between odd ranks modulo $8$. For example, he proved that
\begin{equation*}
	\sum_{n=0}^{\infty}(N^0(0,8,8n+1)-N^0(4,8,8n+1))q^n=\frac{(q^2;q^{2})^4_{\infty}}{(q;q)^2_{\infty}(q^4;q^{4})_{\infty}}.
\end{equation*}
In the above equation and for the rest of this article, we use the notations
\begin{align*}
	(x)_{\infty}:&=(x ; q)_{\infty}:=\prod_{k=0}^{\infty}\left(1-x q^{k}\right) \\
	\left(x_{1}, \ldots, x_{m}\right)_{\infty}:&=\left(x_{1}, \ldots, x_{m} ; q\right)_{\infty}:=\left(x_{1} ; q\right)_{n} \cdots\left(x_{m} ; q\right)_{\infty} \\
	\left[x_{1}, \ldots, x_{m}\right]_{\infty}:&=\left[x_{1}, \ldots, x_{m} ; q\right]_{\infty}:=\left(x_{1}, q / x_{1}, \ldots, x_{n}, q / x_{n} ; q\right)_{\infty},
\end{align*}
where $q=e^{2\pi i\tau}$ with $\text{Im}(\tau)>0$.

Moreover, Wang also obtained that, for $n\geq0$,
\begin{align}
	N^0(0,8 ; 8 n+r)&=N^o(4,8 ; 8 n+r), & r \in\{5,7\} \no
	\intertext{and}
	N^0(1,8 ; 8 n+r)&=N^o(3,8 ; 8 n+r), & r \in\{4,6\}.\no
\end{align}

Cui and Gu \cite{cui} studied the generating functions of odd ranks modulo $3$ and $6$. For example, they showed that 
\begin{align*}
	\sum_{n=0}^{\infty}(N^0(0,3,n)-N^0(1,3,n))q^n&=q\sum_{n=0}^{\infty}\frac{q^{2n(n+1)}(q;q^{2})_{n+1}}{(q^3;q^{6})_{n+1}},
	\\
	\sum_{n=0}^{\infty}(N^0(0,6,n)-N^0(3,6,n))q^n&=q\frac{(q^3;q^{3})^2_{\infty}(q^{12};q^{12})^2_{\infty}}{(q^2;q^2)_{\infty}(q^6;q^{6})^2_{\infty}}.
\end{align*}
More recently, Xia \cite{xia} considered odd ranks modulo $12$ and found the relations
\begin{align*}
	N^0(4,12,4n+1)&\geq N^0(6,12,4n+1),\\
	N^0(0,8,16n+3)-N^0(4,8,16n+3)&=N^0(0,12,4n+1)-N^0(6,12,4n+1),
\end{align*}
for $n\geq0.$ 

The main goal of this paper is the following congruences for odd rank modulo $5$.
\begin{theorem}\label{thmain}
	For integers $\alpha\geq1$, let
	\begin{align*}
		\lambda_{ \alpha} :=
		\left\{
		\begin{aligned}
			&	\frac{2 \cdot 5^{\alpha}-1}{3}, \quad&&\text{if $\alpha$ is odd,} \\
			&\frac{ 5^{\alpha}-1}{3},\quad&&\text{else}
		\end{aligned}
		\right..
	\end{align*}
	Then we have 
	\begin{align}
		N^0(1,5,5^{\alpha+1}n+5\lambda_\alpha+2)\equiv N^0(2,5,5^{\alpha+1}n+5\lambda_\alpha+2)\pmod{5^{\left\lfloor\frac{\alpha+1}{2}\right\rfloor }}.\no
	\end{align}
\end{theorem}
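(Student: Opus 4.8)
The plan is to follow the classical Watson–Hirschhorn–Hunt template for congruences modulo powers of $5$, as adapted in recent work of Chen–Chen–Garvan for rank differences. First I would introduce the rank-difference generating function
\[
R_5(\tau):=\sum_{n=0}^{\infty}\bigl(N^0(1,5,n)-N^0(2,5,n)\bigr)q^n,
\]
and rewrite it, using the known Hecke-type or Appell–Lerch series representation for the odd-rank generating function together with the $5$-dissection of the relevant theta quotient, as a weakly holomorphic modular function on a congruence subgroup (roughly $\Gamma_0(10)$, with character, after a suitable $q\to q$ normalization). The point of the first step is purely to certify modularity: identify the cusps, the orders of vanishing, and a small space of such functions, so that an eventual identity between $q$-series can be reduced to checking finitely many coefficients.

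Next I would set up the iteration. Let $L$ denote the operator that extracts the arithmetic progression $5n+\beta$ (for the residue $\beta$ dictated by $5\lambda_\alpha+2$) and rescales $q^{5n+\beta}\mapsto q^n$. The core of the argument is a two-term recurrence: one shows that $L$ maps the relevant module of modular functions (a rank-two module over $\mathbb{Z}[t]$, where $t$ is the Hauptmodul $t=\eta(q)^{6}/\eta(q^{5})^{6}$ or its analogue for the group at hand) into itself, and that in the basis adapted to the cusps the matrix of $L$ has entries divisible by appropriate powers of $5$. Concretely I expect to produce functions $\phi_\alpha$ (one for $\alpha$ odd, one for $\alpha$ even, matching the two cases in the definition of $\lambda_\alpha$) with
\[
\phi_{\alpha}=\sum_{n\ge 0}\Bigl(N^0(1,5,5^{\alpha+1}n+5\lambda_\alpha+2)-N^0(2,5,5^{\alpha+1}n+5\lambda_\alpha+2)\Bigr)q^{n},
\]
and a relation of the shape $\phi_{\alpha+2}=5\,(\text{something integral})\cdot\phi_{\alpha}+5^{?}(\cdots)$, so that a gain of one factor of $5$ is achieved every second step; this produces the exponent $\lfloor(\alpha+1)/2\rfloor$. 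The base cases $\alpha=1,2$ (and perhaps $\alpha=3,4$ to seed both parities of the recurrence) are then verified directly by computing enough coefficients and matching against the finite-dimensional space identified in the first step.

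The main obstacle, and where I would spend the most care, is establishing the divisibility-by-$5$ bookkeeping in the recurrence: one must choose the Hauptmodul and the generating set of the module so that the "$U_5$-type" operator genuinely contracts by the right $5$-power, which amounts to controlling the orders of $\phi_\alpha$ at every cusp of the relevant group and showing they are forced to be large enough. This is the step that fails if one picks a slightly wrong normalization, and it is also where the distinction between $\alpha$ odd and $\alpha$ even enters—the shift $\lambda_\alpha$ is exactly the value of $\delta$ that makes $5\lambda_\alpha+2$ lie in the image of the previous progression, analogous to the $24\delta_\alpha\equiv1$ condition in Ramanujan's congruence. A secondary technical point is passing from the Appell–Lerch/Hecke form of the odd-rank generating function to an honest modular function: this typically requires combining the $m=1$ and $m=2$ pieces so that the non-modular "completion" terms cancel, which is precisely why the theorem is about the \emph{difference} $N^0(1,5,\cdot)-N^0(2,5,\cdot)$ rather than an individual count.

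Once these are in place the theorem follows immediately, since $\phi_\alpha\equiv 0\pmod{5^{\lfloor(\alpha+1)/2\rfloor}}$ is exactly the asserted congruence. I would also remark that the same machinery, run with the residue classes shifted, should recover or refine the $\bmod\,5$ relations of Cui–Gu as the $\alpha=1$ instance.
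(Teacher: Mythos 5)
Your plan is essentially the paper's proof: the authors first establish, by an Atkin--Swinnerton-Dyer style dissection of the Lambert-series form of the odd-rank generating function, that $\sum_{n\ge 0}(N^0(1,5,5n+2)-N^0(2,5,5n+2))q^n=(q^5;q^5)_\infty^2/(q^2;q^2)_\infty$, and then run exactly the two-step $U_5$-iteration you describe on a rank-two module spanned by $1$ and an auxiliary eta quotient $\rho$ over powers of the Hauptmodul $t=\eta^2(5\tau)\eta^2(10\tau)/(\eta^2(\tau)\eta^2(2\tau))$ on $\Gamma_0(10)$, with $5$-adic valuation bounds on the coefficients yielding the exponent $\left\lfloor\frac{\alpha+1}{2}\right\rfloor$. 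No substantive divergence from the paper's route.
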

To establish Theorem \ref{thmain}, we first need the following generating function.
\begin{theorem}\label{thmain2}
	We have 
	\begin{align}
		&\sum_{n=0}^{\infty}(N^0(1,5,5n+2)-N^0(2,5,5n+2))q^n=\frac{(q^{5};q^{5})^2_{\infty}}{(q^2;q^{2})_{\infty}}\label{id52}.
	\end{align}
\end{theorem}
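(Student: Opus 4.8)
To prove Theorem~\ref{thmain2}, the plan is to pass to the two–variable odd–rank generating function, isolate the residue classes $m\equiv1,2\pmod5$ with roots of unity, and then carry out an explicit $5$–dissection. Recall Andrews' generating function for odd Durfee symbols refined by the odd rank: with
\[
R^{0}(z;q):=\sum_{n\ge1}\frac{q^{2n^{2}-2n+1}}{(zq;q^{2})_{n}\,(z^{-1}q;q^{2})_{n}},
\]
one has $R^{0}(z;q)=\sum_{n\ge1}\sum_{m\in\mathbf Z}N^{0}(m,n)\,z^{m}q^{n}$, where $N^{0}(m,n)$ is the number of odd Durfee symbols of $n$ with odd rank $m$; in particular $R^{0}(1;q)=q\,\omega(q)$, and more generally $R^{0}(z;q)=q\,g_{3}(zq,q^{2})$ for the universal mock theta function $g_{3}$. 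Since $N^{0}(m,n)=N^{0}(-m,n)$ we have $R^{0}(z;q)=R^{0}(z^{-1};q)$. Writing $\zeta=e^{2\pi i/5}$ and using orthogonality of the additive characters modulo $5$ — pairing the index $j$ with $5-j$ and using $\cos\tfrac{2\pi}{5}-\cos\tfrac{4\pi}{5}=\tfrac{\sqrt5}{2}$ — a short computation gives
\[
\sum_{n\ge0}\bigl(N^{0}(1,5,n)-N^{0}(2,5,n)\bigr)q^{n}=\frac{1}{\sqrt5}\Bigl(R^{0}(\zeta;q)-R^{0}(\zeta^{2};q)\Bigr).
\]
Thus \eqref{id52} is equivalent to the assertion that the part of $\tfrac{1}{\sqrt5}\bigl(R^{0}(\zeta;q)-R^{0}(\zeta^{2};q)\bigr)$ supported on exponents $\equiv2\pmod5$ equals $q^{2}\,(q^{25};q^{25})_{\infty}^{2}/(q^{10};q^{10})_{\infty}$.

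\textbf{Mock decomposition and dissection.} Next I would use the standard decomposition of $g_{3}(x,Q)$ as a quotient of theta functions plus an Appell--Lerch sum (Hickerson--Mortenson; equivalently Watson's transformation applied to the defining $q$--hypergeometric series), specialized to $x=\zeta^{j}q$, $Q=q^{2}$ for $j=1,2$. This writes each $R^{0}(\zeta^{j};q)$ as an explicit eta/theta quotient $\Theta_{j}(q)$ plus $(1-\zeta^{j})(1-\zeta^{-j})$ times an Appell--Lerch sum $m\!\left(x_{j},q^{a_{j}},z_{j}\right)$; pinning down $x_{j},a_{j},z_{j}$ and the attendant constants is the key structural input, and it is the fifth--root--of--unity analogue of the evaluations behind the modulo $3,6,8,12$ work cited in the introduction. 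Then I would $5$--dissect both ingredients: for the theta quotients this is routine via Ramanujan's $5$--dissections of $(q;q)_{\infty}$, the Jacobi triple product, and the Rogers--Ramanujan identities, while for the Appell--Lerch sums one uses the standard dissection obtained by splitting the summation index modulo $5$.

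\textbf{Main obstacle.} The crux — which I expect to be the hard part — is to verify that in $R^{0}(\zeta;q)-R^{0}(\zeta^{2};q)$, restricted to the progression $2\pmod5$, the genuinely mock contributions of the two Appell--Lerch sums cancel against each other (or collapse to honest theta quotients), exactly as the mock terms do in Atkin--Swinnerton-Dyer's rank--difference identities; equivalently, that the shadow of the odd--rank function does not meet the class $2\pmod5$. Granting this, the surviving piece is identified with $(q^{5};q^{5})_{\infty}^{2}/(q^{2};q^{2})_{\infty}$ (after $q^{5}\mapsto q$) by a finite eta--product manipulation. A more structural route to the same end is to prove modularity directly: by Zwegers' completion, $R^{0}(\zeta;q)$ and $R^{0}(\zeta^{2};q)$ are holomorphic parts of components of a vector--valued harmonic Maass form of weight $\tfrac12$, so once the shadow is seen to vanish on $2\pmod5$ the relevant restriction is a weakly holomorphic modular form of weight $\tfrac12$ on a suitable $\Gamma_{0}(N)$; since the right--hand side of \eqref{id52} is of the same kind, the identity then follows by matching finitely many Fourier coefficients against a Sturm bound.

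Finally, once \eqref{id52} is established, Theorem~\ref{thmain} follows from it by a $5$--adic analysis of the eta quotient $(q^{5};q^{5})_{\infty}^{2}/(q^{2};q^{2})_{\infty}$ under iterated application of the $U_{5}$ operator together with the fifth--order modular equation, in the spirit of Watson and Hirschhorn--Hunt for $p(n)$ and of Chen--Chen--Garvan for $a_{f}(n)$.
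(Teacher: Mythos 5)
Your reduction of the left side of \eqref{id52} to $\tfrac{1}{\sqrt5}\bigl(R^{0}(\zeta;q)-R^{0}(\zeta^{2};q)\bigr)$ is correct, and it is equivalent in substance to the paper's actual starting point, namely the Cui--Gu formula $\sum_{n\ge0}N^{0}(t,s;n)q^{n}=\frac{1}{(q^{2};q^{2})_{\infty}}\sum_{n\in\mathbb{Z}}\frac{(-1)^{n}q^{3n^{2}+3n+1+(2n+1)t}}{1-q^{(2n+1)s}}$ from \cite{cui}, which already packages the root-of-unity extraction as a single Lambert-type series. From that point on, however, your proposal is a plan rather than a proof: every step that actually carries the argument is deferred. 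You yourself label as ``the main obstacle'' the verification that, on the progression $2\pmod5$, the genuinely mock contributions cancel or collapse, and you then proceed ``granting this.'' That verification is precisely where all of the content of the paper's proof lies: Lemma \ref{lepp} dissects the two Lambert series by splitting the summation index modulo $5$ and invoking Chan's generalized Lambert series identity \cite{chan} to recombine the pieces, and Lemma \ref{leaa} supplies the five theta-quotient identities (proved via Chu's identity \cite{chu}) needed to evaluate the modular remainder. Only once \eqref{les1}, \eqref{les2} and \eqref{leid5} are in hand does one see that the surviving Lambert-type sums contribute solely to exponents $\equiv0$ and $\equiv4\pmod5$, so that the class $2\pmod5$ is picked up entirely by the single theta quotient $A_{2}$, which is exactly \eqref{id52}. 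None of these identities is stated, let alone proved, in your write-up, so the central step is a genuine gap rather than a routine computation one can wave at.

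The alternative ``structural route'' you sketch (Zwegers completion, vanishing of the shadow on $2\pmod5$, Sturm bound) is likewise only an outline: to know that the relevant restriction is weakly holomorphic you must still perform the same kind of dissection to check that the non-holomorphic correction misses the class $2\pmod5$, and you would additionally need to pin down the group, multiplier system, and orders at all cusps before a finite coefficient comparison becomes a proof. So while your overall strategy is sound and is indeed the one the paper follows in spirit (Atkin--Swinnerton-Dyer), the argument as written does not yet establish \eqref{id52}.
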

We apply the method of Atkin and Swinnerton-Dyer \cite{atkin} to prove Theorem \ref{thmain2} and one can also obtain generating functions of $N^0(1,5,5n+i)-N^0(2,5,5n+i)$ with $i=0, 1, 3, 4$ from the proof. 

We prove Theorem \ref{thmain} by similar arguments in \cite{peter,wang}. In Section \ref{s2}, we establish identities between modular functions on $\Gamma_0(10)$. They are applied to obtain Theorem \ref{thmain} whose proof is given in Section \ref{s4}. 
Theorem \ref{thmain2} is proved in Section \ref{s3}.

\section{Preliminaries}\label{s2}
%
 Dedekind's eta-function is defined by
\begin{align}
	\eta(\tau):=q^{\frac{1}{24}}\prod_{j=1}^\infty(1-q^j).\label{defeta}
\end{align}
Following \cite{wang} we also need 
\begin{align*}
	\rho&:=\rho(\tau):=\frac{\eta^2(2 \tau) \eta^4(5 \tau)}{\eta^4(\tau) \eta^2(10 \tau)},\\
	t&:=t(\tau):=\frac{\eta^2(5 \tau) \eta^2(10 \tau)}{\eta^2(\tau) \eta^2(2 \tau)} ,\\
	Z:&=Z(\tau)=\frac{\eta(50\tau)}{\eta(2\tau)}
	\intertext{and}
	H:&=H(\tau)=\frac{\eta^2(\tau)}{\eta^2(25\tau)}.
\end{align*}
Using the criteria for the modularity of eta-products \cite[Theorem 4.7]{new}, we can check that $\rho, t$ are modular functions on $\Gamma_0(10)$ and $Z, H$ are on $\Gamma_0(50)$ and $\Gamma_0(25)$, respectively.

For a function 
\begin{align*}
	g(\tau)=\sum_{n=-\infty}^{\infty} a_{g}(n) q^{n},
\end{align*}
the operator $U_{k}$ is defined by
\begin{align}
	U_{ k}(g)(\tau):&=\frac{1}{k}\sum_{\lambda=0}^{k-1}g\left(\frac{\tau+\lambda}{k}\right).\no
\end{align}
A straightforward calculation gives
\begin{align}
	U_{k}(f(q^k)g)(\tau)&=f(q) \sum_{n=-\infty}^{\infty} a_{g}(k n) q^{n}.\label{2121330}
\end{align}
Recall \cite[Lemma 2.3]{wang}:
\begin{lemma}
	Let
	\begin{align*}
		& a_0(t)=-t, \\ &
		a_1(t)=-t\left(2 \cdot 5+5^2 t\right), \\& a_2(t)=-t\left(11 \cdot 5+2 \cdot 5^3 t+5^4 t^2\right), \\
		& a_3(t)=-t\left(28 \cdot 5+11 \cdot 5^3 t+2 \cdot 5^5 t^2+5^6 t^3\right) \\
		& a_4(t)=-t\left(7 \cdot 5^2+28 \cdot 5^3 t+11 \cdot 5^5 t^2+2 \cdot 5^7 t^3+5^8 t^4\right) .
	\end{align*}
	Then, for $u: \mathbb{H} \rightarrow \mathbb{C}$ and $j \in \mathbb{Z}$, we have
	\begin{align}
		U_5\left(u t^j\right)=-\sum_{l=0}^4 a_l(t) U_5\left(u t^{j+l-5}\right)\label{eqf}.
	\end{align}
\end{lemma}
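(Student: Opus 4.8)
The plan is to deduce the operator identity \eqref{eqf} from a single \emph{modular equation of order $5$} satisfied by the eta-quotient $t(\tau)=\eta^2(5\tau)\eta^2(10\tau)/\!\left(\eta^2(\tau)\eta^2(2\tau)\right)$, namely
\begin{equation*}
t(\tau)^5+\sum_{l=0}^{4}a_l\!\left(t(5\tau)\right)t(\tau)^{l}=0.\tag{$\star$}
\end{equation*}
Granting $(\star)$, the lemma is almost immediate and holds simultaneously for every $j\in\mathbb Z$. Indeed $t(5\tau)$ is a power series in $q^{5}$, so each $a_l\!\left(t(5\tau)\right)$ equals $f(q^{5})$ with $f(q)=a_l\!\left(t(\tau)\right)$. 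Multiplying $(\star)$ by $u(\tau)\,t(\tau)^{\,j-5}$ (legitimate since $t$ is nonvanishing on $\mathbb H$, so $t^{j-5}$ makes sense for all $j$) gives $u\,t^{j}=-\sum_{l=0}^{4}a_l\!\left(t(5\tau)\right)u\,t^{\,j+l-5}$, and applying $U_5$ termwise together with the pull-out rule \eqref{2121330} converts each inner factor $a_l\!\left(t(5\tau)\right)$ into the outer coefficient $a_l\!\left(t(\tau)\right)=a_l(t)$, producing exactly $U_5(ut^{j})=-\sum_{l=0}^{4}a_l(t)\,U_5(ut^{\,j+l-5})$. Thus all the arithmetic content sits in $(\star)$.

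To prove $(\star)$ I would work in the field of modular functions on $\Gamma_0(50)$. Since $t(\tau)$ is modular on $\Gamma_0(10)\supset\Gamma_0(50)$ and $t(5\tau)$ is modular on $\Gamma_0(50)$, the function
\begin{equation*}
D(\tau):=t(\tau)^5+\sum_{l=0}^{4}a_l\!\left(t(5\tau)\right)t(\tau)^{l}
\end{equation*}
is a modular function on $\Gamma_0(50)$, and as a polynomial in the holomorphic (on $\mathbb H$) eta-quotients $t(\tau),t(5\tau)$ it is holomorphic throughout the upper half-plane. From \eqref{defeta} one reads off $t(\tau)=q+O(q^{2})$ and $t(5\tau)=q^{5}+O(q^{10})$, so in $D$ the two leading contributions $t^{5}$ and $a_0\!\left(t(5\tau)\right)=-t(5\tau)$ cancel and $D$ vanishes to high order at the cusp $\infty$. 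It remains only to control the poles of $D$ at the finite cusps.

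The substantive step --- and the main obstacle --- is this cusp bookkeeping on $\Gamma_0(50)$. Using the order formula for eta-quotients \cite[Theorem 4.7]{new} one computes the orders of $t(\tau)$ and $t(5\tau)$ at each cusp $a/c$ and thereby bounds from above the total pole order $B$ of $D$ over all finite cusps. By the valence formula a nonzero modular function on $\Gamma_0(50)$ has equally many zeros and poles, so if the $q$-expansion of $D$ is shown to vanish to order strictly greater than $B$ at $\infty$, then $D$ cannot have any pole and must be identically zero; this reduces $(\star)$ to a single finite $q$-series verification. The only delicate part is the pole-order accounting at the several cusps of $\Gamma_0(50)$ --- equivalently, recognizing that $\tau\mapsto 5\tau$ makes $t(\tau)$ algebraic of degree exactly $5$ over $\mathbb C\!\left(t(5\tau)\right)$, which is what forces a relation of the shape $(\star)$ and pins down the integer coefficients; the Atkin--Lehner involution $W_{50}$ can be exploited to pair up cusps and roughly halve this computation.
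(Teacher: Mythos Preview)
The paper does not prove this lemma at all: it is imported verbatim as \cite[Lemma~2.3]{wang} (Wang--Yang), so there is no in-paper argument to compare against. Your proposal is therefore the first actual proof sketch on the table, and it is the standard one. The reduction of the operator identity to the single modular equation $(\star)$ is correct and complete: once $t(\tau)^5+\sum_{l=0}^4 a_l\!\left(t(5\tau)\right)t(\tau)^l=0$ holds, multiplying by $u\,t^{j-5}$ and applying $U_5$ with the pull-out rule \eqref{2121330} yields \eqref{eqf} exactly as you describe.

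Two small points on the verification of $(\star)$. First, the citation you give for cusp orders is the one the paper uses for the \emph{modularity criterion}; for orders at cusps the paper (and the Wang--Yang source) relies on Ligozat's formula \cite[Theorem~4.8]{lig}, so you may want to adjust the reference. Second, rather than computing pole orders of $D$ on $\Gamma_0(50)$ directly, it is a little cleaner to observe that the five conjugates $t\!\left((\tau+\lambda)/5\right)$, $0\le\lambda\le4$, are permuted by $\Gamma_0(10)$, so the elementary symmetric functions in these conjugates are modular functions on $\Gamma_0(10)$ itself; they are then forced to be polynomials in $t(\tau)$ (since $t$ is a Hauptmodul-type generator for the relevant function field), and a short $q$-expansion check on $\Gamma_0(10)$ --- with only four cusps rather than twelve --- pins down the $a_l$. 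This is how Wang--Yang organize it, and it avoids the heavier $\Gamma_0(50)$ bookkeeping you flag as the main obstacle. Either route is valid; yours simply trades a smaller group for a larger cusp computation.
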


\begin{lemma}\label{legroup}
	Let
	\begin{align}
		U^{(0,j)}(f):=U_5(Z\cdot\rho^j\cdot f),\no\\
		U^{(1,j)}(f):=U_5(H\cdot\rho^j\cdot f).\no
	\end{align}
	Then we have
	\begin{align*}
		\intertext{	Group I}
		U^{(0,0)}(1)&=5t+5^2t^2-\rho(5t)\\
		U^{(0,0)}(t^{-1})&=1+5t-\rho\\
		U^{(0,0)}(t^{-2})&=-3-3\cdot 5t+\rho(4-5^2t)\\
		U^{(0,0)}(t^{-3})&=-1+4\cdot5^2t+6\cdot5^3t^2+5^5t^3+\rho(-5+4\cdot5^2t)\\
		U^{(0,0)}(t^{-4})&=63-33\cdot5^2t-17\cdot5^4t^2-18\cdot5^5t^3-5^7t^4\\&\quad+\rho(-2\cdot5^2+5^4t+2\cdot5^5t^2+2\cdot5^6t^3)
		\intertext{	Group II}
		U^{(0,1)}(1)&=-4t+5^2t^2-3\cdot5^4 t^3-4\cdot5^5t^4-5^7t^5\\&\quad+\rho(8\cdot5t+12 \cdot5^3t^2+7\cdot 5^5t^3+8\cdot5^6t^4+5^8t^5)\\
		U^{(0,1)}( t^{-1})&=5t\\
		U^{(0,1)}( t^{-2})&=1\\
		U^{(0,1)}( t^{-3})&=-7+12\cdot5t+7\cdot5^3t^2+5^5t^3+\rho(5-4\cdot5^2t-5^4t^2)\\
		U^{(0,1)}( t^{-4})&=23-19\cdot5^2t-13\cdot5^4t^2-13\cdot5^5t^3-5^7t^4\\
		&\quad+\rho(-6\cdot5+7\cdot5^3t+2\cdot5^5t^2+5^6t^3)\\
		\intertext{	Group III}
		U^{(1,0)}(1)&=-1\\
		U^{(1,0)}(t^{-1})&=t^{-1}-5+5^2t+\rho (-t^{-1}+5^2)\\
		U^{(1,0)}(t^{-2})&=-4 t^{-1}+4\cdot 5- 5^4t^2+\rho(4t^{-1}-4\cdot5^2)\\
		U^{(1,0)}(t^{-3})&=t^{-1}+2\cdot 5^3+7\cdot5^4t+9\cdot5^5t^2+4\cdot5^6t^3+\rho(-2\cdot5^3-5^5t-5^6t^2)\\
		U^{(1,0)}(t^{-4})&=16\cdot5 t^{-1}-118\cdot5^2-14\cdot5^5t-18\cdot5^6t^2-2\cdot5^8t^3-5^8t^4\\&\quad+\rho(-18\cdot5t^{-1}+38\cdot5^3+2\cdot5^6t+2\cdot5^7t^2)
		\intertext{	Group IV}
		U^{(1,1)}(1)&=-2+5\rho\\
		U^{(1,1)}( t^{-1})&=-5-5^2t+5\rho \\
		U^{(1,1)}( t^{-2})&=t^{-1}- 5-5^3t- 5^4t^2+\rho(-t^{-1}+5^2+5^3t)\\
		U^{(1,1)}( t^{-3})&=-8t^{-1}+59\cdot 5+7\cdot5^4t+8\cdot5^5t^2+4\cdot5^6t^3\\&\quad+\rho(9t^{-1}-19\cdot5^2-5^5t-4\cdot5^5t^2)\\
		U^{(1,1)}( t^{-4})&=29 t^{-1}-67\cdot5^2-42\cdot5^4t-54\cdot5^5t^2-7\cdot5^7t^3-5^8t^4\\&\quad+\rho(-7\cdot5t^{-1}+19\cdot5^3+6\cdot5^5t+6\cdot5^6t^2+5^7t^3)
	\end{align*}
\end{lemma}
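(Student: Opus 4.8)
\emph{Proof strategy.} Every quantity occurring in Lemma~\ref{legroup} is a meromorphic modular function on $\Gamma_0(10)$: on the right, $\rho$ and $t$ already live there, while on the left $Z\rho^jf$ and $H\rho^jf$ are modular functions on $\Gamma_0(50)$ (as $Z$ is on $\Gamma_0(50)$, $H$ on $\Gamma_0(25)$, and $\rho,t$ on $\Gamma_0(10)$, all of which contain $\Gamma_0(50)$), and since $5^2\mid 50$ the operator $U_5$ carries modular functions on $\Gamma_0(50)$ to modular functions on $\Gamma_0(10)$. So each line of the lemma asserts the equality of two elements of the function field $\mathbb{C}(X_0(10))$, and the plan is the classical one used for congruences of this type (as in \cite{hh,wang}): identify the relevant finite-dimensional space of such functions from their pole behaviour at cusps, and then pin down the identity by comparing finitely many Fourier coefficients.

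The first thing I would nail down is the structure of $\mathbb{C}(X_0(10))$. The curve $X_0(10)$ has genus $0$ with the four cusps $\infty,0,\tfrac12,\tfrac15$; using the cusp-order formula for eta-quotients \cite[Theorem~4.7]{new} one computes the divisors of $\rho$ and $t$ and sees that $t$ exhibits $X_0(10)$ as a double cover of $\mathbb{P}^1$, so that $\{1,\rho\}$ is a basis of $\mathbb{C}(X_0(10))$ over $\mathbb{C}(t)$, with one explicit quadratic relation $\rho^2=A(t)\rho+B(t)$ ($A,B\in\mathbb{C}[t]$) checked directly from the $q$-expansions. This is exactly what makes it legitimate to seek each $U^{(a,j)}(t^{-i})$ in the displayed shape ``Laurent polynomial in $t$, plus $\rho$ times a Laurent polynomial in $t$''.

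Next comes the cusp bookkeeping. Tabulating the orders of $\eta(\tau),\eta(2\tau),\eta(5\tau),\eta(10\tau),\eta(25\tau),\eta(50\tau)$ at all relevant cusps gives the divisors of $\rho,t,Z,H$; combining this with a fixed system of coset representatives for $U_5$ and the $\Gamma_0(10)$-action on cusps, I would show that $U^{(a,j)}(t^{-i})$ is holomorphic at $\tfrac12$ and $\tfrac15$ and has poles of explicitly bounded order at $0$ and at $\infty$, the bound at $\infty$ growing with $i$ through $t^{-1}=q^{-1}+\cdots$ and the remaining bounds controlled by the known orders of $t,Z,H$. Hence $U^{(a,j)}(t^{-i})$ lies in a finite-dimensional space with explicit basis $\{t^k\}\cup\{\rho\, t^k\}$ over the prescribed exponent ranges, and I would determine the coefficients by expanding the left side through \eqref{2121330} together with the $q$-series of $Z,H,\rho,t$ — only coefficients indexed by multiples of $5$ contribute — and solving the resulting linear system; the valence formula on $X_0(10)$ guarantees that finitely many coefficients suffice for a proof. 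Carrying this out for the twenty entries gives the lemma; for the sequel one notes that the five seeds for each fixed $(a,j)$, together with the recursion \eqref{eqf}, then produce $U^{(a,j)}(t^k)$ for all $k\in\mathbb{Z}$, and the stability of the $\mathbb{C}[t,t^{-1}]$-span of $\{1,\rho\}$ under $f\mapsto U_5(Zf)$ and $f\mapsto U_5(Hf)$ is what keeps $j$ confined to $\{0,1\}$.

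The hard part will be the \emph{sharpness} of the cusp analysis: one needs tight upper bounds for the pole orders of $U^{(a,j)}(t^{-i})$ at $0$ and $\infty$, and an honest argument that no pole appears at $\tfrac12$ or $\tfrac15$, since any slack enlarges the ambient space and leaves the coefficient-matching underdetermined. This forces a careful choice of $U_5$-coset representatives — arranged so that the cusp $0$ is reached only through representatives whose contribution is controlled by the orders of $Z$ and $H$ — together with a painstaking eta-quotient order count at every cusp. Once these bounds are in place, the quadratic relation for $\rho$ over $\mathbb{C}(t)$ and the final rational linear algebra are entirely routine.
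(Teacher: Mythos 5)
Your strategy is essentially the paper's own: both sides are identified as modular functions on $\Gamma_0(10)$, pole orders at the cusps are bounded via eta-quotient order formulas and the behaviour of $U_5$ on $\Gamma_0(50)$ (the paper cites Ligozat and Gordon--Hughes for exactly these two steps), and the valence formula then reduces each identity to checking finitely many $q$-expansion coefficients, carried out by computer. Your additional remarks on the genus-zero structure of $X_0(10)$ and the quadratic relation of $\rho$ over $\mathbb{C}(t)$ are consistent refinements rather than a different route.
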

\begin{proof}[Sketch of proof]	
	All of the equations in Group I-IV are identities between modular functions on $\Gamma_0(10)$.
	Use \cite[Theorem 4.8]{lig} to compute the orders of an eta-product at the cusps of $\Gamma_0(10)$.
	Armed with \cite[Theorem 4]{gk}, one can obtain the lower bounds for orders of $U_5(f)$ at cusps of $\Gamma_0(10)$ where
	$f$ is an eta-product on $\Gamma_0(50)$.
	Then applying the Valence Formula \cite[p.98]{rank}, we can prove each identity in Group I-IV by verifying the $q$-expansions of both sides agree up to some power of $q$.
	All of the above calculations can be done by the MAPLE package ETA \cite{maple1}.
	For example, the Maple commands of verifying the second identity in Group I are provided at https://github.com/dongpanghu/code3.
\end{proof}
Following \cite{peter}, we call a map $d: \mathbb{Z} \times \mathbb{Z} \longrightarrow \mathbb{Z}$ a discrete array if for each $i$ the map $d(i,-): \mathbb{Z} \longrightarrow \mathbb{Z}$, by $j \mapsto d(i, j)$ has finite support.
\begin{lemma}\label{leuab}
	There exists discrete arrays $a_{i,j}, b_{i,j}$ with $0\leq i, j\leq1$ such that for $k\in\mathbb{Z}$
	\begin{align}\label{uab}
		U^{(i,j)}\left( t^k\right)= \sum_{n=\left\lceil\frac{k-s_{i,j}}{5}\right\rceil}^{\infty} a_{i,j}(k, n) t^n+\rho\left(\sum_{n=\left\lceil\frac{k-s_{i,j}}{5}\right\rceil}^{\infty} b_{i,j}(k, n) t^n\right),
	\end{align}
	where 
	\begin{align*}
		s_{i,j}=\left\{
		\begin{aligned}
			-&1,\quad\text{when}\quad (i,j)=(0,0)\\
			-	&2,\quad\text{when}\quad (i,j)=(0,1)\\
			&4,\quad\text{when}\quad (i,j)=(1,0)\\
			&3,\quad\text{when}\quad (i,j)=(1,1)
		\end{aligned}
		\right..
	\end{align*}
	Moreover, the values of $a_{i,j}(k, n)$ and $b_{i,j}(k, n)$ for $-4 \leq k\leq 0$ are given in Group I--IV of Lemma \ref{legroup}, and for other $k, a_{i,j}(k, n), b_{i,j}(k, n)$ satisfy the recurrence relation
	in \cite[Eq. (2.17)]{wang}$:$
	\begin{align}\label{rr}
		m(	k, 	n)= & \left(7 \cdot 5^2 m(	k-1, 	n-1)+28 \cdot 5^3 m(	k-1, 	n-2)+11 \cdot 5^5 m(	k-1, 	n-3)\right. \no\\
		& \left.+2 \cdot 5^7 m(	k-1, 	n-4)+5^8 m(	k-1, 	n-5)\right)+(28 \cdot 5 m(	k-2, 	n-1)\no \\
		& \left.+11 \cdot 5^3 m(	k-2, 	n-2)+2 \cdot 5^5 m(	k-2, 	n-3)+5^6 m(	k-2, 	n-4)\right) \no\\
		& +\left(11 \cdot 5 m(	k-3, 	n-1)+2 \cdot 5^3 m(	k-3, 	n-2)+5^4 m(	k-3, 	n-3)\right) \no\\
		& +\left(2 \cdot 5 m(	k-4, 	n-1)+5^2 m(	k-4, 	n-2)\right)+m(	k-5, 	n-1) .
	\end{align}
\end{lemma}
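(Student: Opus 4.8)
The statement to prove is Lemma~\ref{leuab}, which asserts the existence of discrete arrays $a_{i,j},b_{i,j}$ expressing $U^{(i,j)}(t^k)$ as a polynomial in $t$ (with bounded order) plus $\rho$ times another such polynomial, with the base cases $-4\le k\le 0$ given explicitly in Lemma~\ref{legroup} and all other $k$ governed by the recurrence \eqref{rr}.

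My plan is to proceed by a two-sided induction on $k$, using the base data in Lemma~\ref{legroup} as the anchor for $-4\le k\le 0$ and the recurrence \eqref{eqf} to push the index up (for $k\ge 1$) and down (for $k\le -5$). First I would observe that \eqref{eqf}, applied with $u=Z\rho^j$ or $u=H\rho^j$, immediately gives the relation
\begin{align*}
U^{(i,j)}(t^k)=-\sum_{l=0}^4 a_l(t)\,U^{(i,j)}(t^{k+l-5}),
\end{align*}
where the $a_l(t)$ are the explicit polynomials (each divisible by $t$) from the cited lemma. This expresses $U^{(i,j)}(t^k)$ in terms of $U^{(i,j)}(t^{k-1}),\dots,U^{(i,j)}(t^{k-5})$, so knowing the claim for five consecutive exponents yields it for the next one upward; solving the same relation for the lowest-indexed term $U^{(i,j)}(t^{k-5})$ lets us descend. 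Thus the four explicit groups (which cover $k=-4,-3,-2,-1,0$, i.e.\ five consecutive values) form exactly the seed needed for both directions.

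The heart of the argument is then bookkeeping. For the upward step I would substitute the inductive expressions $U^{(i,j)}(t^m)=\sum_n a_{i,j}(m,n)t^n+\rho\sum_n b_{i,j}(m,n)t^n$ for $m=k-1,\dots,k-5$ into the displayed relation, multiply through by the polynomials $a_l(t)$, and collect coefficients of $t^n$ and of $\rho t^n$ separately. Because $\rho$ appears only linearly and $a_l(t)$ is a polynomial in $t$ alone, the $t$-part and the $\rho t$-part do not mix, so the coefficients $a_{i,j}(k,n)$ and $b_{i,j}(k,n)$ satisfy one and the same linear recurrence; matching the monomials $t^{n-1},\dots,t^{n-5}$ coming from each $a_l(t)U^{(i,j)}(t^{k-l'})$ produces precisely \eqref{rr} (this is the content of \cite[Eq.~(2.17)]{wang}, which I would simply cite). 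One must also track the support: if $U^{(i,j)}(t^m)$ is supported in $t$-degrees $\ge\lceil (m-s_{i,j})/5\rceil$, then since $a_l(t)$ contributes degrees between $1$ and $l+1$ and the smallest shift is $m=k-5$, the product is supported in degrees $\ge\lceil(k-5-s_{i,j})/5\rceil+1=\lceil(k-s_{i,j})/5\rceil$, which is the claimed lower bound; the $s_{i,j}$ values are chosen exactly so the base cases and this propagation are consistent. Finiteness of support on the other side (the discrete-array condition) is automatic because each step multiplies finitely supported arrays by a fixed polynomial.

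The main obstacle is not conceptual but the verification that the explicit entries in Lemma~\ref{legroup} really are mutually compatible with \eqref{rr} and with the stated orders $s_{i,j}$ — i.e.\ that the seed is correct and that the first recurrence step out of the seed reproduces no spurious lower-order terms. I would handle this by checking the order bounds directly from the $q$-expansions listed in Groups I–IV (the minimal $t$-powers there must equal $\lceil(k-s_{i,j})/5\rceil$ for $k=-4,\dots,0$), and by noting that one application of \eqref{eqf} to, say, $k=1$ using the Group entries, followed by collecting terms, must agree with \eqref{rr}; since this is the same computation already carried out in \cite{wang}, invoking that reference closes the gap. A secondary subtlety worth a remark is that \eqref{eqf} is stated for arbitrary $u:\mathbb H\to\mathbb C$, so its use with $u=Z\rho^j$ and $u=H\rho^j$ is legitimate with no modularity input needed at this stage; the modularity (already established in Lemma~\ref{legroup}'s proof) is only what guarantees the seed identities, not the recursive propagation.
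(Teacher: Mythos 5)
Your proposal is correct and follows exactly the paper's route: the authors' proof is a two-line sketch (verify the claim for $-4\le k\le 0$ from Groups I--IV, then induct on $k$ via \eqref{eqf}), and you have simply filled in the same induction with the bookkeeping on supports and the derivation of \eqref{rr} made explicit, including the downward direction for $k\le -5$ that the paper leaves implicit.
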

\begin{proof}
	Applying Group I-IV in Lemma \ref{legroup}, we verify that the result holds for $-4 \leq k\leq 0$. Then one can prove Lemma \ref{leuab} by \eqref{eqf} and induction on $k$.
\end{proof}
\section{Proof of Theorem \ref{thmain2}}\label{s3}
We need two lemmas.
\begin{lemma}\label{lepp}
	Let
	\begin{align*}
		P_0&:=\frac{[q^{10},q^{20};q^{50}]_{\infty}(q^{50};q^{50})^2_{\infty}}{[q^{5},q^{5},q^{15};q^{50}]_{\infty}},\\
		P_3&:=\frac{q^3[q^{10},q^{20};q^{50}]_{\infty}(q^{50};q^{50})^2_{\infty}}{[q^{5},q^{15},q^{15};q^{50}]_{\infty}},\\
		P_4&:=\frac{q^4[q^{10},q^{20};q^{50}]_{\infty}(q^{50};q^{50})^2_{\infty}}{[q^{5},q^{15},q^{25};q^{50}]_{\infty}}.
	\end{align*}
	Then we have
	\begin{align}
		\sum_{n=-\infty}^{\infty}\frac{(-1)^nq^{3n^2+7n+3}}{1-q^{10n+5}}&=\frac{(q^{2};q^{2})_{\infty}}{(q^{50};q^{50})_{\infty}}\sum_{n=-\infty}^{\infty}\frac{(-1)^nq^{75n^2+125n+49}}{1-q^{50n+35}}+P_3+P_4,\label{les1}\\
		\sum_{n=-\infty}^{\infty}\frac{(-1)^nq^{3n^2+5n+2}}{1-q^{10n+5}}&=-\frac{(q^{2};q^{2})_{\infty}}{(q^{50};q^{50})_{\infty}}\sum_{n=-\infty}^{\infty}\frac{(-1)^nq^{75n^2+25n}}{1-q^{50n+5}}+P_0-P_4.\label{les2}
	\end{align}
\end{lemma}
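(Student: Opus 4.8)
The plan is to treat both \eqref{les1} and \eqref{les2} as instances of a single dissection-and-recombination identity for the Appell--Lerch-type sums
\[
R(z,q):=\sum_{n=-\infty}^{\infty}\frac{(-1)^n q^{3n^2}z^n}{1-q^{10n+5}},
\]
which is the kind of sum that appears naturally when one expands the left-hand side of \eqref{id52} via the rank generating function and specializes a root of unity. First I would isolate the arithmetic progression of the exponent $10n+5$ modulo $50$: writing $n=5m+r$ with $r\in\{-2,-1,0,1,2\}$ splits the $n$-sum into five subsums, in exactly one of which ($r=0$) the denominator becomes $1-q^{50m+25}$, producing the ``main'' sum on the right with its shifted quadratic exponent $75m^2+\cdots$, while the four remaining values of $r$ contribute the finite eta-quotient pieces $P_0,P_3,P_4$ (and their negatives). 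The prefactor $(q^2;q^2)_\infty/(q^{50};q^{50})_\infty$ is precisely the ratio one gets from pulling $(q^{50};q^{50})_\infty$ out of the standard quintuple-product-style evaluation of each residue subsum, so this bookkeeping is forced once the dissection is fixed.

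Next I would evaluate the four ``off-diagonal'' subsums in closed form. Each of them is, after completing the square in $m$, a bilateral series of the shape $\sum_m (-1)^m q^{\text{quadratic}(m)}/(1-q^{\text{linear}(m)})$ with the linear form now a \emph{unit} modulo the modulus, hence a geometric-type sum that telescopes; alternatively, and more cleanly, each is recognized as a specialization of the Appell--Lerch function $m(x,q,z)$ whose theta-decomposition is classical, and collapses to a quotient of theta functions $[q^a;q^{50}]_\infty$. Converting those theta quotients into the stated $P_0,P_3,P_4$ is then a matter of the Jacobi triple product together with the identity $[x;q]_\infty=[q/x;q]_\infty$, plus a check that the overall power of $q$ matches ($q^3$, $q^4$, etc.), which one reads off from the constant term of the completed square. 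The sign pattern ($+P_3+P_4$ in \eqref{les1} versus $+P_0-P_4$ in \eqref{les2}) comes out of the $(-1)^n=(-1)^r(-1)^m$ factor distributing over the residues $r$.

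The main obstacle, and the step I would spend the most care on, is the bookkeeping of the quadratic exponents under the substitution $n=5m+r$: one must verify that $3(5m+r)^2+7(5m+r)+3$ really does produce $75m^2+125m+49$ when $r$ hits the residue giving the main term, and that the leftover $r$-values line up their completed-square constants with the exponents $3$ and $4$ advertised in $P_3,P_4$ (and $0$ in $P_0$). A sign or an off-by-one in the shift $\delta$ here would silently break the identity, so I would pin down $\delta$ first by matching the $r=0$ term on both sides before touching the $P_j$'s. Everything else — the geometric collapse of the unit-modulus subsums, the triple-product rewriting — is routine once the exponent arithmetic is nailed down, and as a final safeguard I would confirm the resulting identity numerically to, say, $q^{200}$.
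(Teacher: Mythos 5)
Your opening move --- dissecting the sum by writing $n=5m+r$ --- is the same as the paper's, but the core of your argument rests on a claim that is false: the four ``off-diagonal'' subsums do \emph{not} individually collapse to theta quotients. Each of them is a genuine Appell--Lerch-type (mock modular) object; the whole point of the Atkin--Swinnerton-Dyer machinery is that such sums cannot be evaluated one at a time. What actually saves the day is a \emph{three-term} generalized Lambert series identity (Chan's Theorem 2.1, the analogue of Atkin--Swinnerton-Dyer's lemma): a suitable theta quotient equals a sum of three Lambert series, so that \emph{pairs} of the subsums, say $S_4-S_1$ and $S_3-S_2$, can each be expressed as an explicit theta-quotient multiple of the one surviving subsum $S_5$ plus one of the $P_j$. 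Without this lemma your ``geometric collapse'' step has nothing to stand on --- the denominators $1-q^{50m+d}$ are never units, and recognizing each subsum as ``a specialization of $m(x,q,z)$ whose theta-decomposition is classical'' is not available, since $m(x,q,z)$ itself is mock modular and only \emph{differences} $m(x,q,z_1)-m(x,q,z_0)$ are theta quotients.

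Two further points. First, your identification of the surviving main term is off: with $10(5m+r)+5=50m+10r+5$, the residue $r=0$ gives denominator $1-q^{50m+5}$, and the term that survives on the right of \eqref{les1} is the $r=3$ piece with denominator $1-q^{50n+35}$; this matters because the choice of which subsum to keep dictates which Chan specializations you need. Second, the prefactor $(q^2;q^2)_\infty/(q^{50};q^{50})_\infty$ does not come from ``pulling $(q^{50};q^{50})_\infty$ out of a quintuple-product evaluation'': after the two pairwise reductions the coefficient of $S_5$ is $-\bigl(1-\frac{[q^{20};q^{50}]_\infty}{q^2[q^{10};q^{50}]_\infty}+\frac{q^2[q^{10};q^{50}]_\infty}{[q^{20};q^{50}]_\infty}\bigr)$, and one needs the separate $5$-dissection identity for $(q^2;q^2)_\infty$ (equation \eqref{mao}) to recognize this as $(q^2;q^2)_\infty/\bigl(q^2(q^{50};q^{50})_\infty\bigr)$. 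Both of these ingredients --- the three-term Lambert series identity and the pentagonal-product dissection --- are essential and absent from your outline, so as written the proof does not go through.
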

\begin{proof}
	We only give the detailed proof of \eqref{les1}. One can obtain \eqref{les2} with a completely similar argument.
	
	Splitting the series on the right side of \eqref{les1} into five series according to the summation
	index $n$ modulo $5$, we obtain
	\begin{align}\label{s15}
		&\sum_{n=-\infty}^{\infty}\frac{(-1)^nq^{3n^2+7n+3}}{1-q^{10n+5}}\nonumber\\&=\sum_{r=-1}^3(-1)^r\sum_{n=-\infty}^{\infty}\frac{(-1)^nq^{3(5n+r)^2+7(5n+r)+3}}{1-q^{10(5n+r)+5}}\nonumber\\
		&	=-\sum_{n=-\infty}^{\infty}\frac{(-1)^nq^{75n^2+5n-1}}{1-q^{50n-5}}+\sum_{n=-\infty}^{\infty}\frac{(-1)^nq^{75n^2+35n+3}}{1-q^{50n+5}}-\sum_{n=-\infty}^{\infty}\frac{(-1)^nq^{75n^2+65n+13}}{1-q^{50n+15}}\nonumber\\
		&\quad	+\sum_{n=-\infty}^{\infty}\frac{(-1)^nq^{75n^2+95n+29}}{1-q^{50n+25}}-\sum_{n=-\infty}^{\infty}\frac{(-1)^nq^{75n^2+125n+51}}{1-q^{50n+35}}\nonumber\\
		&:=-S_1+S_2-S_3+S_4-S_5.
	\end{align}
	Set $r=0, s=3$ in \cite[Theorem 2.1]{chan} to obtain
	\begin{align}
		\frac{(q)_{\infty}^{2}}{[b_{1},b_{2},b_{3}]_{\infty}}&=
		\frac{1}{[b_{2}/b_{1},b_{3}/b_{1}]_{\infty}}\sum_{k=-\infty}^{\infty}\frac{(-1)^{k}q^{3k(k+1)/2}}{1-b_{1}q^{k}}\left(\frac{b_{1}^{2}}{b_{2}b_{3}}\right)^{k}\nonumber\\
		&\quad+\frac{1}{[b_{1}/b_{2},b_{3}/b_{2}]_{\infty}}\sum_{k=-\infty}^{\infty}\frac{(-1)^{k}q^{3k(k+1)/2}}{1-b_{2}q^{k}}\left(\frac{b_{2}^{2}}{b_{1}b_{3}}\right)^{k}\nonumber\\
		&\quad+\frac{1}{[b_{1}/b_{3},b_{2}/b_{3}]_{\infty}}\sum_{k=-\infty}^{\infty}\frac{(-1)^{k}q^{3k(k+1)/2}}{1-b_{3}q^{k}}\left(\frac{b_{3}^{2}}{b_{1}b_{2}}\right)^{k}.\label{eqchan}
	\end{align}
	Replacing $(q, b_1, b_2, b_3)$ by $(q^{50}, q^{-5}, q^{25}, q^{35})$ in \eqref{eqchan} and simplifying, we find that
	\begin{align*}
		\frac{(q^{50};q^{50})^2_{\infty}}{[q^{-5},q^{25},q^{35};q^{50}]_{\infty}}&=\frac{1}{[q^{30},q^{40};q^{50}]_{\infty}}\sum_{n=-\infty}^{\infty}\frac{(-1)^nq^{75n^2+5n}}{1-q^{50n-5}}
		\\&\quad+\frac{1}{[q^{-30},q^{10};q^{50}]_{\infty}}\sum_{n=-\infty}^{\infty}\frac{(-1)^nq^{75n^2+95n}}{1-q^{50n+25}}\\&\quad+\frac{1}{[q^{-10},q^{-40};q^{50}]_{\infty}}\sum_{n=-\infty}^{\infty}\frac{(-1)^nq^{75n^2+125n}}{1-q^{50n+35}},
	\end{align*}
	which is equivalent to
	\begin{equation}
		S_4-S_1=\frac{[q^{20};q^{50}]_{\infty}}{q^2[q^{10};q^{50}]_{\infty}}S_5+P_4.\label{s41}
	\end{equation}
	Similarly, we can apply \eqref{eqchan} to prove that
	\begin{equation}
		S_3-S_2=\frac{q^2[q^{10};q^{50}]_{\infty}}{[q^{20};q^{50}]_{\infty}}S_5-P_3.\label{s32}
	\end{equation}
	Substituting \eqref{s41} and \eqref{s32} into \eqref{s15} gives
	\begin{align}
		&\sum_{n=-\infty}^{\infty}\frac{(-1)^nq^{3n^2+7n+3}}{1-q^{10n+5}}\nonumber\\&
		=-\left(1-\frac{[q^{20};q^{50}]_{\infty}}{q^2[q^{10};q^{50}]_{\infty}}+\frac{q^2[q^{10};q^{50}]_{\infty}}{[q^{20};q^{50}]_{\infty}}\right)\times S_5+P_0+P_4\label{p04}
	\end{align}
	Replacing $q$ by $q^2$ in \cite[Eq. (2.12)]{mao} gives
	\begin{align}
		(q^2;q^2)_{\infty}
		&=\frac{[q^{20};q^{50}]_{\infty}(q^{50};q^{50})_{\infty}}{[q^{10};q^{50}]_{\infty}}-q^{2}(q^{50};q^{50})_{\infty}-\frac{q^{4}[q^{10};q^{50}]_{\infty}(q^{50};q^{50})_{\infty}}{[q^{20};q^{50}]_{\infty}},\label{mao}
	\end{align}
	which implies
	\begin{align}
		1-\frac{[q^{20};q^{50}]_{\infty}}{q^2[q^{10};q^{50}]_{\infty}}+\frac{q^2[q^{10};q^{50}]_{\infty}}{[q^{20};q^{50}]_{\infty}}=\frac{-	(q^2;q^2)_{\infty}}{q^{2}(q^{50};q^{50})_{\infty}}\label{3disp04}.
	\end{align}
	Substituting \eqref{3disp04} into \eqref{p04}, we obtain \eqref{les1}.
\end{proof}
\begin{lemma}\label{leaa}
	Let 
	\begin{align*}
		A_0:&=\frac{[q^{15},q^{15};q^{50}]_{\infty}(q^{50};q^{50})_{\infty}}{[q^{5},q^{10},q^{20};q^{50}]_{\infty}},\\
		A_1:&=\frac{q^6[q^{5};q^{50}]_{\infty}(q^{50};q^{50})_{\infty}}{[q^{10},q^{20};q^{50}]_{\infty}},\\
		A_2:&=\frac{q^2[q^{25};q^{50}]_{\infty}(q^{50};q^{50})_{\infty}}{[q^{10},q^{20};q^{50}]_{\infty}},\\
		A_3:&=\frac{q^3[q^{15};q^{50}]_{\infty}(q^{50};q^{50})_{\infty}}{[q^{10},q^{20};q^{50}]_{\infty}},\\
		A_4:&=-\frac{q^9[q^{5},q^{5};q^{50}]_{\infty}(q^{50};q^{50})_{\infty}}{[q^{10},q^{15},q^{20};q^{50}]_{\infty}}.
	\end{align*}
	Then we have 
	\begin{equation}\label{leid5}
		\frac{1}{(q^{2};q^{2})_{\infty}}\left\{P_0-P_3-2P_4\right\}=A_0+A_1+A_2+A_3+A_4.
	\end{equation}
\end{lemma}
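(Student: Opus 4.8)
The combination $P_0-P_3-2P_4$ is exactly what \eqref{les1} and \eqref{les2} are designed to control ($P_4$ occurs in both, with opposite signs), so the plan is to deduce \eqref{leid5} from Lemma \ref{lepp} together with the three-term theta identity \eqref{eqchan} of \cite{chan}, in the spirit of the proof of \eqref{les1}. Solving \eqref{les2} for $P_0-P_4$ and \eqref{les1} for $P_3+P_4$ and subtracting gives
\begin{align*}
	P_0-P_3-2P_4
	&=\sum_{n=-\infty}^{\infty}\frac{(-1)^nq^{3n^2+5n+2}}{1-q^{10n+5}}
	-\sum_{n=-\infty}^{\infty}\frac{(-1)^nq^{3n^2+7n+3}}{1-q^{10n+5}}\\
	&\quad+\frac{(q^2;q^2)_{\infty}}{(q^{50};q^{50})_{\infty}}
	\left(\sum_{n=-\infty}^{\infty}\frac{(-1)^nq^{75n^2+25n}}{1-q^{50n+5}}
	+\sum_{n=-\infty}^{\infty}\frac{(-1)^nq^{75n^2+125n+49}}{1-q^{50n+35}}\right).
\end{align*}
Dividing through by $(q^2;q^2)_\infty$, it remains to show that the right-hand side, so written, equals $A_0+A_1+A_2+A_3+A_4$.

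To do this I would bring every Lambert series to base $q^{50}$: the two series with denominators $1-q^{50n+5}$ and $1-q^{50n+35}$ are already of this form, while the two with denominator $1-q^{10n+5}$ would be split into five sub-series according to the residue of the summation index modulo $5$, producing base-$q^{50}$ Lambert series of the same shape as $S_1,\dots,S_5$ in \eqref{s15}. I would then apply \eqref{eqchan} to suitable triples $(q^{50},q^{a},q^{b},q^{c})$ among these series --- precisely the manipulation that yielded \eqref{s41}, \eqref{s32} and hence \eqref{p04} --- to re-express the whole right-hand side as a combination of generalized eta products, and use \eqref{mao} to convert the prefactor $(q^2;q^2)_{\infty}^{-1}$ into base-$q^{50}$ products. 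Finally, simplifying by means of identities such as
\begin{align*}
	[q^{10},q^{20};q^{50}]_{\infty}&=\frac{(q^{10};q^{10})_{\infty}}{(q^{50};q^{50})_{\infty}},\\
	[q^{5},q^{10},q^{15},q^{20};q^{50}]_{\infty}(q^{25};q^{50})_{\infty}(q^{50};q^{50})_{\infty}&=(q^{5};q^{5})_{\infty}
\end{align*}
to merge factors, one should recover $A_0+\dots+A_4$ term by term.

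The step I expect to be the main obstacle is the bookkeeping: several Lambert series, each breaking into five residue classes, get tied together in triples by \eqref{eqchan}, so keeping exact track of signs, of the powers of $q$ appearing in front, and of the spurious $[\,\cdot\,]_{\infty}$-factors requires the same care as --- but is bulkier than --- the passage from \eqref{s15} to \eqref{p04}. As a safeguard, or as an independent route, I would note that once all denominators are cleared \eqref{leid5} is an identity between generalized eta quotients; by the modularity criteria for such quotients (cf. \cite{new,lig}), both sides are, up to a power of $q$, modular functions on $\Gamma_1(50)$, so one can bound their orders at every cusp by the analogue of \cite[Theorem 4.8]{lig} for generalized eta quotients and then invoke the Valence Formula to reduce \eqref{leid5} to checking that the $q$-expansions of the two sides agree up to an explicit power of $q$ --- a verification that can be carried out with the package ETA \cite{maple1}, exactly as in the proof of Lemma \ref{legroup}. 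This modular route trades the delicate Lambert-series manipulation for a heavier but entirely routine cusp computation.
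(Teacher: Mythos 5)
Your main route is circular. The expression you derive for $P_0-P_3-2P_4$ from \eqref{les1} and \eqref{les2} is algebraically correct, but it is not progress: the two Lambert series with denominator $1-q^{10n+5}$, once dissected modulo $5$ and pushed through \eqref{eqchan}, reduce back to the residual series with denominators $1-q^{50n+5}$ and $1-q^{50n+35}$ plus the products $P_0,P_3,P_4$ --- that reduction \emph{is} the proof of Lemma \ref{lepp} --- and those residual series then cancel identically against the two you added explicitly. You land at $P_0-P_3-2P_4=P_0-P_3-2P_4$. No choice of triples in \eqref{eqchan} can conjure the $A_j$ out of this, because the quantity you are manipulating is by construction identically the left side of \eqref{leid5}; the detour through Lambert series (which are not eta quotients and must disappear from any proof of a pure product identity) contributes nothing. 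A secondary problem: you propose to use \eqref{mao} to convert the prefactor $(q^2;q^2)_\infty^{-1}$, but \eqref{mao} is a three-term dissection of $(q^2;q^2)_\infty$ itself, and its reciprocal admits no such finite dissection.

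The missing idea is to clear the denominator and dissect by residue classes of the \emph{exponents}. The paper proves the equivalent form $P_0-P_3-2P_4=(q^2;q^2)_\infty(A_0+\cdots+A_4)$ by substituting \eqref{mao} for the factor $(q^2;q^2)_\infty$ on the right and observing that every resulting term is supported on a single residue class of exponents modulo $5$ ($P_0$ and the class-$0$ part of the right side on $\equiv 0$, $P_3$ on $\equiv 3$, $P_4$ on $\equiv 4$, and so on). Equating the five classes separately reduces \eqref{leid5} to five identities among theta quotients with base $q^{50}$, such as \eqref{id511}, each of which is then settled by Chu's addition formula \eqref{idchu}. That final ingredient --- a theta addition theorem applied to the products themselves --- is absent from your sketch. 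Your fallback route (clear denominators, note that each term is a generalized eta quotient that is modular on $\Gamma_1(50)$ up to a power of $q$, bound the orders at all cusps, and finish with the Valence Formula and a finite coefficient check) is sound in principle and would give an acceptable, if purely computational, alternative to the paper's argument; but as written it is a declaration of intent with none of the cusp analysis carried out, so it cannot be credited as a proof either.
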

\begin{proof}
	Equation \eqref{leid5} is equivalent to
	\begin{equation}
		P_0-P_3-2P_4=(q^{2};q^{2})_{\infty}(A_0+A_1+A_2+A_3+A_4).\label{pa}
	\end{equation}
	Invoking \eqref{mao} on the right hand side of \eqref{pa} and comparing both sides according to the powers of
	$q$ modulo $5$, we find that it suffices to show the following fives identities:
	\begin{align}
		\frac{[q^{15},q^{15};q^{50}]_{\infty}}{[q^{5},q^{10},q^{10};q^{50}]_{\infty}}+\frac{q^5[q^{15};q^{50}]_{\infty}}{[q^{10},q^{20};q^{50}]_{\infty}}-\frac{q^{10}[q^{5};q^{50}]_{\infty}}{[q^{20},q^{20};q^{50}]_{\infty}}
		&=\frac{[q^{10},q^{20};q^{50}]_{\infty}}{[q^{5},q^{5},q^{15};q^{50}]_{\infty}},\label{id511}\\
		\frac{q^6[q^{5};q^{50}]_{\infty}}{[q^{10},q^{10};q^{50}]_{\infty}}+\frac{q^{11}[q^{5},q^{5};q^{50}]_{\infty}}{[q^{10},q^{15},q^{20};q^{50}]_{\infty}}-\frac{q^6[q^{25};q^{50}]_{\infty}}{[q^{20},q^{20};q^{50}]_{\infty}}&=0,\no\\
		\frac{q^{2}[q^{25};q^{50}]_{\infty}}{[q^{10},q^{10};q^{50}]_{\infty}}-\frac{q^{2}[q^{15},q^{15};q^{50}]_{\infty}}{[q^{5},q^{10},q^{20};q^{50}]_{\infty}}+\frac{q^7[q^{15};q^{50}]_{\infty}}{[q^{20},q^{20};q^{50}]_{\infty}}&=0,\no
		\\
		\frac{q^3[q^{15};q^{50}]_{\infty}}{[q^{10},q^{10};q^{50}]_{\infty}}+\frac{q^8[q^{5};q^{50}]_{\infty}}{[q^{10},q^{20};q^{50}]_{\infty}}-\frac{q^{13}[q^{5},q^{5};q^{50}]_{\infty}}{[q^{15},q^{20},q^{20};q^{50}]_{\infty}}&=	\frac{q^{3}[q^{10},q^{20};q^{50}]_{\infty}}{[q^{5},q^{15},q^{15};q^{50}]_{\infty}},
		\no\\
		\frac{q^{9}[q^{5},q^{5};q^{50}]_{\infty}}{[q^{10},q^{10},q^{15};q^{50}]_{\infty}}+\frac{q^{4}[q^{25},;q^{50}]_{\infty}}{[q^{10},q^{20};q^{50}]_{\infty}}+\frac{q^{4}[q^{15},q^{15};q^{50}]_{\infty}}{[q^{5},q^{20},q^{20};q^{50}]_{\infty}}&=2\frac{q^{4}[q^{10},q^{20};q^{50}]_{\infty}}{[q^{5},q^{15},q^{25};q^{50}]_{\infty}}.\no
	\end{align}
	The proofs of the above fives equations are similar to each others, we only show \eqref{id511}. 
	After multiplying by $[q^{5},q^{10},q^{10},q^{20},q^{20};q^{50}]_{\infty}$ throughout and rearranging
	we find that \eqref{id511} is equivalent to
	\begin{align}
		&	q^{10}[q^{5},q^{5},q^{10},q^{10};q^{50}]_{\infty}-[q^{15},q^{15},q^{20},q^{20};q^{50}]_{\infty}\no\\&=q^5[q^5,q^{10},q^{15},q^{20};q^{50}]_{\infty}-\frac{[q^{10},q^{10},q^{10},q^{20},q^{20},q^{20};q^{50}]_{\infty}}{[q^{5},q^{15};q^{50}]_{\infty}}.\label{pp51}
	\end{align}
	With $q$ replaced by $q^{50}$, setting $(A,b,c,d,e)=(q^{45},q^{10},q^{30},q^{25},q^{25})$ in \cite[Eq. (3.1)]{chu},
	\begin{align}
		[A/b,A/c,A/d,A/e;q]_\infty-[b,c,d,e;q]_\infty=b[A,A/bc,A/bd,A/be;q]_\infty,\label{idchu}
	\end{align}
	we obtain
	\begin{equation}
		[q^{15},q^{15},q^{20},q^{20};q^{50}]_{\infty}-[q^{10},q^{20},q^{25},q^{25};q^{50}]_{\infty}	=q^{10}[q^{5},q^{5},q^{10},q^{10};q^{50}]_{\infty}.\label{pp52}
	\end{equation}
	Substituting \eqref{pp52} into \eqref{pp51}, multiplying by $\frac{[q^{5},q^{15};q^{50}]_{\infty}}{[q^{10},q^{20};q^{50}]_{\infty}}$ throughout and rearranging, we find that it suffices to show 
	\begin{align}
		&	[q^{10},q^{10},q^{20},q^{20};q^{50}]_{\infty}-[q^{5},q^{15},q^{25},q^{25};q^{50}]_{\infty}=q^5[q^5,q^{5},q^{15},q^{15};q^{50}]_{\infty},\no
	\end{align}
	which is \eqref{idchu} with
	$(q,A,b,c,d,e)$ replaced by $(q^{50},q^{45},q^{5},q^{35},q^{25},q^{25})$.
\end{proof}
Now we are in a position to prove Theorem \ref{thmain2}.
\begin{proof}[Proof of Theorem \ref{thmain2}]
	Recall \cite[Theorem 1.3]{cui}
	\begin{align*}
		&	\sum_{n=0}^{\infty}N^{0}(t,s;n)q^{n}=\frac{1}{(q^{2};q^{2})_{\infty}}\sum_{n=-\infty}^{\infty}\frac{(-1)^{n}q^{3n^{2}+3n+1+(2n+1)t}}{1-q^{(2n+1)s}}
		,
	\end{align*}
	which gives
	\begin{align}
		\no	&\sum_{n=0}^{\infty}(N^0(1,5,n)-N^0(2,5,n))q^n \\
		=&\frac{1}{(q^{2};q^{2})_{\infty}}\sum_{n=-\infty}^{\infty}\frac{(-1)^nq^{3n^2+5n+2}}{1-q^{10n+5}}-\frac{1}{(q^{2};q^{2})_{\infty}}\sum_{n=-\infty}^{\infty}\frac{(-1)^nq^{3n^2+7n+3}}{1-q^{10n+5}}\no\\
		=&\frac{1}{(q^{50};q^{50})_{\infty}}\sum_{n=-\infty}^{\infty}\frac{(-1)^nq^{75n^2+25n}}{1-q^{50n+5}}-\frac{1}{(q^{50};q^{50})_{\infty}}\sum_{n=-\infty}^{\infty}\frac{(-1)^nq^{75n^2+125n+49}}{1-q^{50n+35}}\no\\
		&+\frac{1}{(q^{2};q^{2})_{\infty}}(P_0-P_3-2P_4)\no\\
		=&\frac{1}{(q^{50};q^{50})_{\infty}}\sum_{n=-\infty}^{\infty}\frac{(-1)^nq^{75n^2+25n}}{1-q^{50n+5}}-\frac{1}{(q^{50};q^{50})_{\infty}}\sum_{n=-\infty}^{\infty}\frac{(-1)^nq^{75n^2+125n+49}}{1-q^{50n+35}}\no\\
		&+A_0+A_1+A_2+A_3+A_4,\label{paa}
	\end{align}
	where the second (resp. third) equality follows from Lemma \ref{lepp} (resp. Lemma \ref{leaa}). Extracting terms of the form $q^{5n+2}$ on both sides of \eqref{paa} yields
	\begin{align}
		&\sum_{n=0}^{\infty}(N^0(1,5,5n+2)-N^0(2,5,5n+2))q^{5n+2}=\frac{q^2[q^{25};q^{50}]_{\infty}(q^{50};q^{50})_{\infty}}{[q^{10},q^{20};q^{50}]_{\infty}}\no,
	\end{align}
	which gives \eqref{id52}.
\end{proof}
\section{Proof of Theorem \ref{thmain}}\label{s4}
Let 
\begin{align}
	\sum_{n=0}^\infty e(n)q^n:=\frac{\left(q^{5} ; q^{5}\right)^2_{\infty}} {\left(q^2 ; q^2\right)_{\infty}}\no.
\end{align}
Then from Theorem \ref{thmain2} we have
\begin{align*}
	N^0(1,5,5n+2)-N^0(2,5,5n+2)=e(n).
\end{align*}
Thus Theorem \ref{thmain} follows from
\begin{align}
	e(5^{\alpha}n+\lambda_\alpha)\equiv0 \pmod{5^{\left\lfloor\frac{\alpha+1}{2}\right\rfloor }}.\label{conen}
\end{align}
To prove \eqref{conen}, we define
\begin{align}
	L_{2\alpha -1}:&=\frac{\left(q^{10} ; q^{10}\right)_{\infty}}{\left(q ; q\right)^2_{\infty}}\sum_{n=0}^\infty e\left(5^{2\alpha -1}n+\lambda_{2\alpha-1}\right)q^{n+1}\label{del11}\\
	L_{2 \alpha}&:=\frac{\left(q^2 ; q^2\right)_{\infty}}{\left(q^{5} ; q^{5}\right)^2_{\infty}} \sum_{n=0}^{\infty} e\left(5^{2 \alpha} n+\lambda_{2 \alpha}\right) q^{n},\label{del22}
\end{align}
for $\alpha\geq1$ and $	L_0:=1.$
\begin{lemma}
	
	Then we have
	\begin{align}
		L_{2 \alpha+1} & =	U^{(0,0)}\left(L_{2 \alpha}\right), \label{l21}\\
		L_{2 \alpha+2} & =	U^{(1,0)}\left( L_{2 \alpha+1}\right),\label{l22}
	\end{align}
	for all $\alpha \geq 0$.
\end{lemma}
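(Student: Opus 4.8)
The plan is to unwind each side of \eqref{l21} and \eqref{l22} into an explicit $q$-series and then match the two series via the identity \eqref{2121330} for $U_5$; since \eqref{2121330} is a purely formal identity of Laurent series, nothing beyond the eta-product expansions of $Z$ and $H$ is needed. From \eqref{defeta} one gets
\begin{align*}
	Z=q^{2}\,\frac{(q^{50};q^{50})_\infty}{(q^{2};q^{2})_\infty},
	\qquad
	H=q^{-2}\,\frac{(q;q)^{2}_\infty}{(q^{25};q^{25})^{2}_\infty}.
\end{align*}
Also, setting $\lambda_{0}:=0$ and using $\sum_{n\ge0}e(n)q^{n}=(q^{5};q^{5})^{2}_\infty/(q^{2};q^{2})_\infty$, one checks that \eqref{del22} continues to hold at $\alpha=0$, both sides being $1=L_{0}$; hence the computation below applies uniformly for every $\alpha\ge0$, and no separate base case is needed. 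Throughout, $e(n):=0$ for $n<0$.

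For \eqref{l21}, multiplying out the definitions gives
\begin{align*}
	Z\cdot L_{2\alpha}
	=\frac{(q^{50};q^{50})_\infty}{(q^{5};q^{5})^{2}_\infty}\sum_{n\ge0}e\bigl(5^{2\alpha}n+\lambda_{2\alpha}\bigr)q^{\,n+2}
	=\phi(q^{5})\sum_{n\ge0}e\bigl(5^{2\alpha}n+\lambda_{2\alpha}\bigr)q^{\,n+2},
\end{align*}
where $\phi(q):=(q^{10};q^{10})_\infty/(q;q)^{2}_\infty$. Applying \eqref{2121330} with $k=5$,
\begin{align*}
	U^{(0,0)}(L_{2\alpha})=U_{5}\bigl(Z\,L_{2\alpha}\bigr)
	=\phi(q)\sum_{n}e\bigl(5^{2\alpha}(5n-2)+\lambda_{2\alpha}\bigr)q^{\,n},
\end{align*}
and the summand vanishes for $n\le0$ because $\lambda_{2\alpha}<2\cdot5^{2\alpha}$. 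Since $\lambda_{2\alpha}=\tfrac{5^{2\alpha}-1}{3}$ and $\lambda_{2\alpha+1}=\tfrac{2\cdot5^{2\alpha+1}-1}{3}$, one has $\lambda_{2\alpha}+3\cdot5^{2\alpha}=\lambda_{2\alpha+1}$, hence $5^{2\alpha}(5n-2)+\lambda_{2\alpha}=5^{2\alpha+1}(n-1)+\lambda_{2\alpha+1}$; shifting $n\mapsto n+1$ turns the last display into the series \eqref{del11} defining $L_{2\alpha+1}$, which proves \eqref{l21}.

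For \eqref{l22} the steps run parallel. Multiplying out,
\begin{align*}
	H\cdot L_{2\alpha+1}
	=\frac{(q^{10};q^{10})_\infty}{(q^{25};q^{25})^{2}_\infty}\sum_{n\ge0}e\bigl(5^{2\alpha+1}n+\lambda_{2\alpha+1}\bigr)q^{\,n-1}
	=\psi(q^{5})\sum_{n\ge0}e\bigl(5^{2\alpha+1}n+\lambda_{2\alpha+1}\bigr)q^{\,n-1},
\end{align*}
where $\psi(q):=(q^{2};q^{2})_\infty/(q^{5};q^{5})^{2}_\infty$, and then \eqref{2121330} gives
\begin{align*}
	U^{(1,0)}(L_{2\alpha+1})
	=\psi(q)\sum_{n\ge0}e\bigl(5^{2\alpha+1}(5n+1)+\lambda_{2\alpha+1}\bigr)q^{\,n},
\end{align*}
the terms with negative $n$ dropping out since $\lambda_{2\alpha+1}<4\cdot5^{2\alpha+1}$. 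Using $\lambda_{2\alpha+1}=\tfrac{2\cdot5^{2\alpha+1}-1}{3}$ and $\lambda_{2\alpha+2}=\tfrac{5^{2\alpha+2}-1}{3}$, one has $5^{2\alpha+1}+\lambda_{2\alpha+1}=\lambda_{2\alpha+2}$, so the last display is exactly the series \eqref{del22} defining $L_{2\alpha+2}$, which is \eqref{l22}.

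The whole argument is bookkeeping, so I do not expect a serious obstacle; the three places where an error could slip in are getting the powers of $q$ in the expansions of $Z$ and $H$ right, correctly isolating in each product the Euler factor that is a power series in $q^{5}$ (namely $\phi(q^{5})$ for \eqref{l21} and $\psi(q^{5})$ for \eqref{l22}) so that \eqref{2121330} applies, and keeping track of the lower summation limit through the re-indexing. The only arithmetic input is the pair of identities $\lambda_{2\alpha+1}-\lambda_{2\alpha}=3\cdot5^{2\alpha}$ and $\lambda_{2\alpha+2}-\lambda_{2\alpha+1}=5^{2\alpha+1}$, which is where the parity split in the definition of $\lambda_\alpha$ enters.
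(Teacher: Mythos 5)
Your proof is correct and follows essentially the same route as the paper: expand $Z$ and $H$ as eta-quotients, pull the $q^5$-power-series Euler factor through $U_5$ via \eqref{2121330}, and match the shifted arguments of $e$ using $\lambda_{2\alpha+1}-\lambda_{2\alpha}=3\cdot 5^{2\alpha}$ and $\lambda_{2\alpha+2}-\lambda_{2\alpha+1}=5^{2\alpha+1}$. Your explicit handling of the $\alpha=0$ case (via $\lambda_0=0$) and of the vanishing negative-index terms is a small tidiness improvement over the paper, which leaves both implicit.
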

\begin{proof}
	We first prove \eqref{l21}.
	
	For any $\alpha \geq 0$, 
	\begin{align*}
		U^{(0,0)}\left(L_{2 \alpha}\right)& =U_5\left(\frac{q^2\left(q^{50} ; q^{50}\right)_{\infty}}{\left(q^{2} ; q^{2}\right)_{\infty}}\cdot\frac{\left(q^2 ; q^2\right)_{\infty}}{\left(q^{5} ; q^{5}\right)^2_{\infty}} \sum_{n=0}^{\infty} e\left(5^{2 \alpha} n+\lambda_{2 \alpha}\right) q^{n}\right) \\
		& =\frac{\left(q^{10} ; q^{10}\right)_{\infty}}{\left(q; q\right)^2_{\infty}} \cdot U_5\left(\sum^\infty_{n =0} e\left(5^{2 \alpha}n+\lambda_{2 \alpha}\right) q^{n+2}\right) \\
		& =\frac{\left(q^{10} ; q^{10}\right)_{\infty}}{\left(q; q\right)^2_{\infty}} \cdot \sum^\infty_{n =0} e\left(5^{2 \alpha}(5n+3)+\lambda_{2 \alpha}\right) q^{n+1}\\
		& =\frac{\left(q^{10} ; q^{10}\right)_{\infty}}{\left(q; q\right)^2_{\infty}} \cdot \sum^\infty_{n =0} e\left(5^{2 \alpha+1}n+\lambda_{2 \alpha+1}\right) q^{n+1} \\
		&=L_{2\alpha+1}.
	\end{align*}
	
	Next, we consider \eqref{l22}.
	
	We have
	\begin{align*}
		U^{(1,0)}\left( L_{2 \alpha+1}\right) & =U_5\left(\frac{\left(q ; q\right)^2_{\infty}}{q^2\left(q^{25} ; q^{25}\right)^2_{\infty}}\cdot\frac{\left(q^{10} ; q^{10}\right)_{\infty}}{\left(q ; q\right)^2_{\infty}}\sum_{n=0}^\infty e\left(5^{2\alpha +1}n+\lambda_{2\alpha+1}\right)q^{n+1}\right) \\
		& =\frac{\left(q^2 ; q^2\right)_{\infty}}{\left(q^5 ; q^5\right)^2_{\infty}} \cdot U_5\left(\sum^\infty_{n =0} e\left(5^{2 \alpha+1}n+\lambda_{2 \alpha+1}\right) q^{n-1}\right) \\
		& =\frac{\left(q^2 ; q^2\right)_{\infty}}{\left(q^5 ; q^5\right)^2_{\infty}} \cdot \sum^\infty_{n =0} e\left(5^{2 \alpha+1}(5n+1)+\lambda_{2 \alpha+1}\right) q^{n} \\
		& =\frac{\left(q^2 ; q^2\right)_{\infty}}{\left(q^5 ; q^5\right)^2_{\infty}} \cdot \sum^\infty_{n =0} e\left(5^{2 \alpha+2}n+\lambda_{2 \alpha+2}\right) q^{n} \\
		&=L_{2\alpha+2}.
	\end{align*}
\end{proof}

Denote the $5$-adic order of $n$ by $\pi(n)$ and set $\pi(0)=+\infty$. Then the following result holds.
\begin{theorem}\label{l2al}
	There exists discrete arrays $c, d$ such that for $\alpha\geq1$
	\begin{align}
		L_{\alpha}=\sum_{n\geq\delta_\alpha}c(\alpha,n)t^n+\rho\left(\sum_{n\geq\delta_\alpha}d(\alpha,n)t^n\right),
	\end{align}
	where \begin{align*}
		\delta_\alpha=\left\{
		\begin{aligned}
			&1,\quad\text{if}\quad \alpha \text{ is odd}\\
			&0,\quad\text{else}
		\end{aligned}
		\right..
	\end{align*}
	Moreover, we have
	\begin{align}
		\pi(c(\alpha,n))\geq\left\{
		\begin{aligned}
			\frac{\alpha-1}{2}&+\left\lfloor\frac{5 n-2}{3}\right\rfloor,\quad\text{if}\quad \alpha \text{ is odd}\\
			\frac{\alpha}{2}&+\left\lfloor\frac{5 n+1}{3}\right\rfloor,\quad\text{else}
		\end{aligned}
		\right.\label{pic2}
		\intertext{and}
		\pi(d(\alpha,n))\geq\left\{
		\begin{aligned}
			\frac{\alpha-1}{2}&+\left\lfloor\frac{5 n-1}{3}\right\rfloor,\quad\text{if}\quad \alpha \text{ is odd}\\
			\frac{\alpha}{2}&+\left\lfloor\frac{5 n+3}{3}\right\rfloor,\quad\text{else}
		\end{aligned}
		\right..\label{pid2}
	\end{align}
\end{theorem}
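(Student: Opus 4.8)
The plan is to prove Theorem \ref{l2al} by induction on $\alpha$, using the two recursions \eqref{l21} and \eqref{l22} together with the explicit action of the operators $U^{(0,0)}$ and $U^{(1,0)}$ on powers of $t$ (and $\rho t^n$) recorded in Lemma \ref{leuab} and Lemma \ref{legroup}. The base case $\alpha=0$ is $L_0=1$, i.e. $c(0,0)=1$, $d(0,0)=0$; one checks directly from Group I of Lemma \ref{legroup} that $L_1=U^{(0,0)}(1)=5t+5^2t^2-\rho(5t)$ has the claimed shape with $\delta_1=1$ and that the $5$-adic bounds \eqref{pic2}, \eqref{pid2} hold for $\alpha=1$ (for $n=1$: $\pi(5)=1\ge 0+\lfloor 3/3\rfloor=1$, etc.). The inductive step splits into two cases according to the parity of $\alpha$, and in each case one feeds the inductive expansion of $L_\alpha$ through the appropriate operator.

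For the even-to-odd step, suppose $L_{2\alpha}=\sum_{n\ge 0}c(2\alpha,n)t^n+\rho\sum_{n\ge 0}d(2\alpha,n)t^n$ with the stated valuations. By linearity, $L_{2\alpha+1}=U^{(0,0)}(L_{2\alpha})=\sum_n c(2\alpha,n)U^{(0,0)}(t^n)+\sum_n d(2\alpha,n)U^{(0,0)}(\rho t^n)$. Here I would use Lemma \ref{leuab}: $U^{(0,0)}(t^k)$ is supported on $t^m$ with $m\ge\lceil(k+1)/5\rceil$, with coefficients (for $k\le 0$) read off from Group I and (for $k>0$) governed by the recurrence \eqref{rr}, whose coefficients are all divisible by increasing powers of $5$. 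The key arithmetic input is that each application of $U^{(0,0)}$ gains roughly a factor $5^{1/2}$ in $5$-adic order per unit increase in the $t$-exponent — precisely the content of the floor functions $\lfloor(5n\pm\cdot)/3\rfloor$, which track the "$5$-adic slope" $5/3$ (coming from $5^{(5m-2)/2}\sim 5^{5m/2}/(\text{bounded})$ balanced against the degree growth). One must also handle the terms $U^{(0,0)}(\rho t^n)$; since $\rho^2$ is a polynomial in $t$ over $\mathbb Z[1/5]$ — or more directly, since $U_5(Z\rho^{j}f)$ for $j\ge 1$ is covered by Group II with $U^{(0,1)}$ — the product $\rho\cdot\rho^j$ collapses and one stays within the span of $\{t^n,\rho t^n\}$. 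Tracking the minimum over all contributing $(k,m)$ pairs of $\pi(c(2\alpha,k))+v_5(\text{operator coefficient})$ and simplifying the resulting floor-function inequality (a finite check since the operator has "width" $5$ and only five base cases $-4\le k\le 0$) yields \eqref{pic2}, \eqref{pid2} for $2\alpha+1$.

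The odd-to-even step is entirely analogous, using \eqref{l22} and $U^{(1,0)}$ (Group III, with $\rho t^n$ terms handled via $U^{(1,1)}$ of Group IV), and the same recurrence \eqref{rr} for large exponents; note that $U^{(1,0)}(t^{-1})$ now involves a $t^{-1}$ term, which is why the support shifts down to $\delta_{2\alpha+2}=0$ rather than staying nonnegative, and one must verify the valuation bound still holds at $n=0$. I expect the main obstacle to be purely bookkeeping: verifying that the floor-function valuation bounds are \emph{preserved} (not merely approximately) under the worst-case combination of inputs. Concretely, one must show that for every $k$ in the support of $L_{2\alpha}$ and every $m$ with $a_{0,0}(k,m)\ne 0$ (or $b_{0,0}(k,m)\ne 0$), the inequality $\pi(c(2\alpha,k))+\pi(a_{0,0}(k,m))\ge\frac{2\alpha+1-1}{2}+\lfloor\frac{5m-2}{3}\rfloor$ holds; after substituting the inductive hypothesis this reduces to a statement about the recurrence \eqref{rr} of the form "$\pi(m(k,m))\ge\frac{1}{2}+\lfloor\frac{5m-2}{3}\rfloor-\lfloor\frac{5k+1}{3}\rfloor-\frac{1}{2}$" uniformly, which one proves by induction on $k$ directly from \eqref{rr} by checking that each of the fourteen terms on the right-hand side of \eqref{rr} satisfies the bound — a finite, if tedious, verification combining the power of $5$ in the explicit coefficient with the inductive bound on $m(k-i,n-j)$ and the subadditivity $\lfloor x\rfloor+\lfloor y\rfloor\le\lfloor x+y\rfloor$. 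Once Theorem \ref{l2al} is in hand, \eqref{conen} and hence Theorem \ref{thmain} follow immediately by reading off $\pi(c(\alpha,\delta_\alpha))$ and $\pi(d(\alpha,\delta_\alpha))$ from \eqref{pic2}, \eqref{pid2} and recalling the definitions \eqref{del11}, \eqref{del22} of $L_\alpha$ in terms of the $e(5^\alpha n+\lambda_\alpha)$.
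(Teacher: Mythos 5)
Your proposal is correct and follows essentially the same route as the paper: induction on $\alpha$ via $L_{2\alpha+1}=U^{(0,0)}(L_{2\alpha})$ and $L_{2\alpha+2}=U^{(1,0)}(L_{2\alpha+1})$, expanding through Lemma \ref{leuab} (with the $\rho t^n$ terms absorbed by $U^{(0,1)}$ and $U^{(1,1)}$ from Groups II and IV), and propagating $5$-adic valuations through the recurrence \eqref{rr} from the base cases $-4\le k\le 0$ exactly as the paper does via Lemmas \ref{leppi} and \ref{lepi}. The only cosmetic difference is that the paper phrases the operator-coefficient bounds in the single-floor form $\lfloor\frac{5n-k+\gamma}{3}\rfloor$, which is exactly stable under \eqref{rr}, rather than your difference-of-floors form, and then combines with the inductive hypothesis by superadditivity of the floor function.
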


Note that congruences \eqref{conen} follow immediately from \eqref{del11}, \eqref{del22} and Theorem \ref{l2al}.
Thus, to complete the proof of Theorem \ref{thmain}, it suffices to show Theorem \ref{l2al}. To do this, we need two lemmas.
Recall \cite[Lemma 2.8]{wang}:
\begin{lemma}\label{leppi}
	Let $g(k, n)$ be integers which satisfy the recurrence relation \eqref{rr}. Suppose there exists an integer $l$ and a constant $\gamma$ such that for $l \leq k \leq l+4$ we have
	\begin{align}\label{pi}
		\pi(g(k, n)) \geq\left\lfloor\frac{5 n-k+\gamma}{3}\right\rfloor .
	\end{align}
	Then \eqref{pi} holds for any $k \in \mathbb{Z}$.
\end{lemma}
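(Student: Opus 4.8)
The plan is to prove Lemma~\ref{leppi} by a two-sided induction on $k$ built on the recurrence \eqref{rr}. The five indices $l \le k \le l+4$ serve as the base; from them one propagates \eqref{pi} upward to every $k \ge l+5$ and downward to every $k \le l-1$, using at each step that the $5$-adic order of a $\mathbb Z$-linear combination is at least the minimum of the orders of its terms. Throughout, abbreviate $x := 5n-k+\gamma$, so the bound to be proved reads $\pi(g(k,n)) \ge \lfloor x/3\rfloor$.

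For the upward step, fix $k \ge l+5$ and assume \eqref{pi} for the five predecessors $k-1,\dots,k-5$. Relation \eqref{rr} writes $g(k,n)$ as a $\mathbb Z$-linear combination of fifteen values $g(k-i,n-j)$ with $1 \le i \le 5$, the coefficient of $g(k-i,n-j)$ being $5^{e(i,j)}u_{i,j}$ with $u_{i,j}$ a $5$-adic unit and the exponents $e(i,j)$ as displayed in \eqref{rr}. By the induction hypothesis,
\[
\pi\big(5^{e(i,j)}u_{i,j}\,g(k-i,n-j)\big)\ \ge\ e(i,j)+\left\lfloor\frac{5(n-j)-(k-i)+\gamma}{3}\right\rfloor\ =\ e(i,j)+\left\lfloor\frac{x-5j+i}{3}\right\rfloor .
\]
It then remains to check, for each of the fifteen pairs $(i,j)$, the elementary inequality $e(i,j)+\lfloor(x-5j+i)/3\rfloor\ge\lfloor x/3\rfloor$, which one disposes of by a one-line case distinction on $x\bmod 3$ (several of the fifteen are in fact equalities, among them the term $g(k-5,n-1)$). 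Taking the minimum over the fifteen terms yields $\pi(g(k,n))\ge\lfloor x/3\rfloor$, closing the upward induction.

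For the downward step, fix $k \le l-1$ and assume \eqref{pi} for $k+1,\dots,k+5$ (each of these lies in the base stratum or has been treated at an earlier downward step). Writing \eqref{rr} with left-hand side $g(k+5,n+1)$ and noting that $g(k,n)$ occurs there with coefficient exactly $1$, one solves for $g(k,n)$ and obtains it as $g(k+5,n+1)$ minus a $\mathbb Z$-linear combination of the values $g(k+1,\cdot),\dots,g(k+4,\cdot)$. The leading term already satisfies $\pi \ge \lfloor(5(n+1)-(k+5)+\gamma)/3\rfloor=\lfloor x/3\rfloor$, with equality, and each subtracted term clears $\lfloor x/3\rfloor$ by the very same mod-$3$ floor inequalities as above (with $i$ replaced by $-i$ in the argument). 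Hence $\pi(g(k,n))\ge\lfloor x/3\rfloor$, closing the downward induction; the two directions together give \eqref{pi} for all $k\in\mathbb Z$.

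I do not expect a genuine obstacle here: once organized in this way the proof is a bounded exercise in $5$-adic bookkeeping. The one point requiring care is structural rather than technical — the recurrence \eqref{rr} contains the term $g(k-5,n-1)$ (and, read backwards, $g(k+5,n+1)$) with a \emph{unit} coefficient, so there is no $5$-adic slack along the direction $(k,n)\mapsto(k\mp5,\,n\pm1)$. This is exactly what pins down the shape of the estimate in \eqref{pi} (exponent growing like $\lfloor(5n-k)/3\rfloor$), and it means one really must verify all fifteen term-by-term inequalities rather than a single generic one; that check is the modest bulk of the argument.
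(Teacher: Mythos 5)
Your argument is correct: the two-sided induction on $k$ works, since the coefficient of $g(k-i,n-j)$ in \eqref{rr} is $5^{e(i,j)}$ times a unit with $e(i,j)\ge\lceil(5j-i)/3\rceil$ for all fifteen pairs (with equality in most cases, and with no slack at the unit-coefficient term $g(k-5,n-1)$, which both drives the shape of \eqref{pi} and makes the backward solve for $g(k,n)$ integral). Note that the paper itself gives no proof — it imports the lemma verbatim from \cite[Lemma~2.8]{wang} — so there is nothing to compare against here; your write-up supplies the standard argument, and the only nit is your parenthetical ``with $i$ replaced by $-i$'': in the downward step the shifted exponent $5(n+1-j)-(k+5-i)+\gamma$ again equals $x-5j+i$, so the fourteen remaining inequalities are literally the same ones, not sign-flipped versions.
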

Using Lemma \ref{leppi}, we obtain
\begin{lemma}\label{lepi}
	For $n, k\in\mathbb{Z}$, we have
	\begin{align*}
		\pi(a_{0,0}(k, n)) &\geq\left\lfloor\frac{5 n-k-2}{3}\right\rfloor ,\\
		\pi(b_{0,0}(k, n)) &\geq\left\lfloor\frac{5 n-k-1}{3}\right\rfloor ,\\
		\pi(a_{0,1}(k, n)) &\geq\left\lfloor\frac{5 n-k-3}{3}\right\rfloor ,\\
		\pi(b_{0,1}(k, n)) &\geq\left\lfloor\frac{5 n-k}{3}\right\rfloor ,\\
		\pi(a_{1,0}(k, n)) &\geq\left\lfloor\frac{5 n-k+2}{3}\right\rfloor ,\\
		\pi(b_{1,0}(k, n)) &\geq\left\lfloor\frac{5 n-k+5}{3}\right\rfloor ,\\
		\pi(a_{1,1}(k, n)) &\geq\left\lfloor\frac{5 n-k+2}{3}\right\rfloor ,\\
		\pi(b_{1,1}(k, n)) &\geq\left\lfloor\frac{5 n-k+4}{3}\right\rfloor .
	\end{align*}
\end{lemma}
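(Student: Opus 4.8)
The plan is to prove Lemma~\ref{lepi} by applying Lemma~\ref{leppi} separately to each of the eight discrete arrays $a_{i,j}$ and $b_{i,j}$. The key observation is that, by Lemma~\ref{leuab}, for each fixed pair $(i,j)$ both $a_{i,j}(k,n)$ and $b_{i,j}(k,n)$ satisfy the recurrence relation \eqref{rr} (as functions of $k$ for fixed $n$), exactly the hypothesis required by Lemma~\ref{leppi}. So for each array it suffices to exhibit one window of five consecutive values of $k$, say $l\le k\le l+4$, on which the claimed bound $\pi(\cdot)\ge\lfloor(5n-k+\gamma)/3\rfloor$ holds, with $\gamma$ read off from the statement: $\gamma=-2$ for $a_{0,0}$, $\gamma=-1$ for $b_{0,0}$, $\gamma=-3$ for $a_{0,1}$, $\gamma=0$ for $b_{0,1}$, $\gamma=2$ for $a_{1,0}$, $\gamma=5$ for $b_{1,0}$, $\gamma=2$ for $a_{1,1}$, and $\gamma=4$ for $b_{1,1}$.

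The natural window to use is $k=-4,-3,-2,-1,0$, since for exactly these values the arrays $a_{i,j}(k,n)$ and $b_{i,j}(k,n)$ are given explicitly by the polynomial identities in Group~I--IV of Lemma~\ref{legroup}. Concretely, I would first translate each of the twenty displayed formulas in Lemma~\ref{legroup} into a table of coefficients: the coefficient of $t^n$ (with no $\rho$) gives $a_{i,j}(k,n)$, and the coefficient of $\rho t^n$ gives $b_{i,j}(k,n)$. For instance, from $U^{(0,0)}(t^{-2})=-3-3\cdot 5t+\rho(4-5^2t)$ one reads $a_{0,0}(-2,0)=-3$, $a_{0,0}(-2,1)=-15$, $b_{0,0}(-2,0)=4$, $b_{0,0}(-2,1)=-25$, all other values zero; then one checks $\pi(-3)=0\ge\lfloor(0-(-2)-2)/3\rfloor=0$, $\pi(-15)=1\ge\lfloor(5-(-2)-2)/3\rfloor=1$, $\pi(4)=0\ge\lfloor(0-(-2)-1)/3\rfloor=0$, $\pi(-25)=2\ge\lfloor(5-(-2)-1)/3\rfloor=2$, and trivially the bound holds where the coefficient is $0$ (since $\pi(0)=+\infty$) or where $n$ is small enough that the floor is $\le 0$. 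Repeating this verification across all five values $k\in\{-4,\dots,0\}$ and all eight arrays establishes \eqref{pi} on that window, and Lemma~\ref{leppi} then propagates it to all $k\in\mathbb{Z}$.

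The main obstacle is purely bookkeeping rather than conceptual: one must carry out the $5$-adic valuation check for roughly twenty polynomial identities, each contributing several monomials, and be careful that the floor functions match the way the constants $7\cdot 5^2$, $28\cdot 5^3$, etc.\ in Group~IV distribute their powers of $5$. A convenient way to organize this: note that in every Group~I--IV formula the coefficient of $t^n$ (resp.\ $\rho t^n$) is divisible by $5^{e}$ where $e$ grows roughly linearly in $n$ with slope close to $5/3$, which is precisely what the target floor demands; so the verification reduces to checking the small-$n$ ``boundary'' cases where the slope has not yet kicked in, plus confirming the leading exponents ($5^7t^4$, $5^8t^5$, $5^6t^2$, etc.) are large enough. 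Since all identities in Lemma~\ref{legroup} were themselves machine-verified, this final valuation check is likewise routine to automate, but I would present the $k=-2$ case of $a_{0,0}$ and $b_{0,0}$ in full as a representative sample and remark that the remaining cases are entirely analogous.
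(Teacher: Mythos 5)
Your proposal is correct and is exactly the paper's argument: verify the eight inequalities on the base window $-4\le k\le 0$ directly from the explicit expansions in Groups I--IV of Lemma \ref{legroup}, then invoke Lemma \ref{leppi} (whose recurrence hypothesis is supplied by Lemma \ref{leuab}) to propagate the bounds to all $k\in\mathbb{Z}$. Your sample check for $a_{0,0}(-2,\cdot)$ and $b_{0,0}(-2,\cdot)$ is accurate, and the paper gives no more detail than this.
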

\begin{proof}
	One can verify these inequalities for $-4\leq k\leq0$ using Group I-IV in Lemma \ref{legroup}.
	Then apply Lemma \ref{leppi} to complete the proof.
\end{proof}
Now we are in a position to prove Theorem \ref{l2al}.
\begin{proof}[Proof of Theorem \ref{l2al}]
	Define the discrete arrays $c, d$ inductively.
	
	Let
	$c(1,1)=5, c(1,2)=5^2, c(1,k)=0$ with $k\geq3$ and  $d(1,1)=-5, d(1,k)=0$ with $k\geq2$. For $\alpha\geq1$, define
	\begin{align*}
		c(2\alpha,n)&=\sum_{k\geq1}\left[c(2\alpha-1,k)a_{1,0}(k,n)+d(2\alpha-1,k)a_{1,1}(k,n)\right],\\
		d(2\alpha,n)&=\sum_{k\geq1}\left[c(2\alpha-1,k)b_{1,0}(k,n)+d(2\alpha-1,k)b_{1,1}(k,n)\right],\\
		c(2\alpha+1,n)&=\sum_{k\geq0}\left[c(2\alpha,k)a_{0,0}(k,n)+d(2\alpha,k)a_{0,1}(k,n)\right],\\
		d(2\alpha+1,n)&=\sum_{k\geq0}\left[c(2\alpha,k)b_{0,0}(k,n)+d(2\alpha,k)b_{0,1}(k,n)\right].
	\end{align*}
	
	By Lemma \ref{legroup}, Group I and \eqref{l21}, we have
	\begin{align}
		L_1=U^{(0,0)}(L_0)=U^{(0,0)}(1)=\sum_{n=1}^2c(1,n)t^n+\rho\left(\sum_{n=1}^1d(1,n)t^n\right)\no.
	\end{align}
	Obviously, $\pi(c(1,n))\geq\left\lfloor\frac{5 n-2}{3}\right\rfloor $ and  $\pi(d(1,n))\geq\left\lfloor\frac{5 n-1}{3}\right\rfloor $.
	
	Next, we consider $L_2.$ Using \eqref{l22}, we find that
	\begin{align*}
		L_2&=U^{(1,0)}(L_1)=U_5(H\cdot L_1)
		\no		\\&	=\sum_{n=1}^2c(1,n)
		U_5(H t^n)+\sum_{n=1}^1d(1,n)U_5(H\rho t^n)
		\no	\\&=\sum_{n=1}^2c(1,n)
		U^{(1,0)}( t^n)+\sum_{n=1}^1d(1,n)U^{(1,1)}( t^n)
		\no					\\&=\sum_{n=1}^2c(1,n)
		\left(\sum_{k\geq\left\lceil\frac{n-4}{5}\right\rceil}a_{1,0}(n,k)t^k+\rho\left(\sum_{k\geq\left\lceil\frac{n-4}{5}\right\rceil}b_{1,0}(n,k)t^k\right)\right)\no\\&\quad+\sum_{n=1}^1d(1,n)	\left(\sum_{k\geq\left\lceil\frac{n-3}{5}\right\rceil}a_{1,1}(n,k)t^k+\rho\left(\sum_{k\geq\left\lceil\frac{n-3}{5}\right\rceil}b_{1,1}(n,k)t^k\right)\right)\qquad\qquad\textrm{(by \eqref{uab})}
		\\&=\sum_{n\geq0}\left[\left(\sum_{k=1}^2c(1,k)a_{1,0}(k,n)\right)+\sum_{k=1}^1d(1,k)a_{1,1}(k,n)\right]t^n\\&\quad+
		\rho\left\{\sum_{n\geq0}\left[\left(\sum_{k=1}^2c(1,k)b_{1,0}(k,n)\right)+\sum_{k=1}^1d(1,k)b_{1,1}(k,n)\right]t^n\right\}
		\\&=\sum_{n\geq0}c(2,n)t^n+\rho\left(\sum_{n\geq0}d(2,n)t^n\right).
	\end{align*}
	Use Lemma \ref{lepi}
	\begin{align*}
		\pi(c(2,n))&\geq\min\big\{\pi(c(1,1))+\pi(a_{1,0}(1,n)),\pi(c(1,2))+\pi(a_{1,0}(2,n)),\\&\qquad\qquad\pi(d(1,1))+\pi(a_{1,1}(1,n)) \big\}\\&\geq1+\left\lfloor\frac{5 n+1}{3}\right\rfloor ,
		\\
		\pi(d(2,n))&\geq\min\big\{\pi(c(1,1))+\pi(b_{1,0}(1,n)),\pi(c(1,2))+\pi(b_{1,0}(2,n)),\\&\qquad\qquad\pi(d(1,1))+\pi(b_{1,1}(1,n)) \big\}\\&\geq1+\left\lfloor\frac{5 n+3}{3}\right\rfloor .
	\end{align*}	
	Thus the result holds for $L_{\alpha}$ when $\alpha=1, 2$. We proceed by induction.
	Suppose that the result hold for $L_{2\alpha}$. Then apply \eqref{uab} and \eqref{l21}.
	\begin{align}
		L_{2\alpha+1}&=U^{(0,0)}(L_{2\alpha})=U_5(Z\cdot L_{2\alpha})\no\\
		&=\sum_{n\geq0}c(2\alpha,n)
		U_5(Z t^n)+\sum_{n\geq0}d(2\alpha,n)U_5(Z\rho t^n)\no
		\\
		&=\sum_{n\geq0}c(2\alpha,n)\no
		\no	U^{(0,0)}( t^n)+\sum_{n\geq0}d(2\alpha,n)	U^{(0,1)}(t^n)\\
		&=\sum_{n\geq0}c(2\alpha,n)
		\left(\sum_{k=\left\lceil\frac{n+1}{5}\right\rceil}^{\infty} a_{0,0}(n, k) t^k+\rho\left(\sum_{k=\left\lceil\frac{n+1}{5}\right\rceil}^{\infty} b_{0,0}(n, k) t^k\right)\right)\no
		\\ &\quad+\sum_{n\geq0}d(2\alpha,n)
		\left(\sum_{k=\left\lceil\frac{n+2}{5}\right\rceil}^{\infty} a_{0,1}(n, k) t^k+\rho\left(\sum_{k=\left\lceil\frac{n+2}{5}\right\rceil}^{\infty} b_{0,1}(n, k) t^k\right)\right)\no
		\\
		&=\sum_{n\geq1}
		\left[\sum_{k=0}^{\infty}(c(2\alpha,k) a_{0,0}(k, n) +d(2\alpha,k) a_{0,1}(k, n))\right]t^n\no
		\\&\quad+\rho\left\{\sum_{n\geq1}\no
		\left[\sum_{k=0}^{\infty}(c(2\alpha,k) b_{0,0}(k, n) +d(2\alpha,k) b_{0,1}(k, n))\right]t^n\right\}\\
		&=\sum_{n\geq1}c(2\alpha+1,n)t^n+\rho\left(\sum_{n\geq1}d(2\alpha+1,n)t^n\right).
	\end{align}
	Moreover, using Lemma \ref{lepi}, \eqref{pic2} and \eqref{pid2}, we find that
	\begin{align}
		\pi(c(2\alpha+1,n))&\geq\min_{k\geq0}\Big\{\pi(c(2\alpha,k))+\pi( a_{0,0}(k, n)),\no\\&\qquad\qquad\pi(d(2\alpha,k))+\pi( a_{0,1}(k, n))\Big\}\no\\
		&\geq\alpha+\left\lfloor\frac{5 n-2}{3}\right\rfloor \no
		\intertext{and}
		\pi(d(2\alpha+1,n))&\geq\min_{k\geq0}\Big\{\pi(c(2\alpha,k))+\pi( b_{0,0}(k, n)),\no\\&\qquad\qquad\pi(d(2\alpha,k))+\pi( b_{0,1}(k, n))\Big\}\no\\
		&\geq\alpha+\left\lfloor\frac{5 n-1}{3}\right\rfloor \no.
	\end{align}
	Thus the result holds for $L_{2\alpha+1}$. 
	
	Next, use \eqref{uab} and \eqref{l22} to obtain
	\begin{align}
		L_{2\alpha+2}&=U^{(1,0)}(L_{2\alpha+1})=U_5(H\cdot L_{2\alpha+1})\no\\
		&=\sum_{n\geq1}c(2\alpha+1,n)
		U_5(H t^n)+\sum_{n\geq1}d(2\alpha+1,n)U_5(H\rho t^n)\no
		\\
		&=\sum_{n\geq1}c(2\alpha+1,n)\no
		\no	U^{(1,0)}( t^n)+\sum_{n\geq1}d(2\alpha+1,n)	U^{(1,1)}(t^n)\\
		&=\sum_{n\geq1}c(2\alpha+1,n)
		\left(\sum_{k=\left\lceil\frac{n-4}{5}\right\rceil}^{\infty} a_{1,0}(n, k) t^k+\rho\left(\sum_{k=\left\lceil\frac{n-4}{5}\right\rceil}^{\infty} b_{1,0}(n, k) t^k\right)\right)\no
		\\ &\quad+\sum_{n\geq1}d(2\alpha+1,n)
		\left(\sum_{k=\left\lceil\frac{n-3}{5}\right\rceil}^{\infty} a_{1,1}(n, k) t^k+\rho\left(\sum_{k=\left\lceil\frac{n-3}{5}\right\rceil}^{\infty} b_{1,1}(n, k) t^k\right)\right)\no
		\\
		&=\sum_{n\geq0}
		\left[\sum_{k=1}^{\infty}(c(2\alpha+1,k) a_{1,0}(k, n) +d(2\alpha+1,k) a_{1,1}(k, n))\right]t^n\no
		\\&\quad+\rho\left\{\sum_{n\geq0}\no
		\left[\sum_{k=1}^{\infty}(c(2\alpha+1,k) b_{1,0}(k, n) +d(2\alpha+1,k) b_{1,1}(k, n))\right]t^n\right\}\\
		&=\sum_{n\geq0}c(2\alpha+2,n)t^n+\rho\left(\sum_{n\geq0}d(2\alpha+2,n)t^n\right).
	\end{align}
	Then again by Lemma \ref{lepi}, \eqref{pic2} and \eqref{pid2} we have
	\begin{align}
		\pi(c(2\alpha+2,n))&\geq\min_{k\geq1}\Big\{\pi(c(2\alpha+1,k))+\pi( a_{1,0}(k, n)),\no\\&\qquad\qquad\pi(d(2\alpha+1,k))+\pi( a_{1,1}(k, n))\Big\}\no\\
		&\geq\alpha+1+\left\lfloor\frac{5 n+1}{3}\right\rfloor \no
		\intertext{and}
		\pi(d(2\alpha+2,n))&\geq\min_{k\geq1}\Big\{\pi(c(2\alpha+1,k))+\pi( b_{1,0}(k, n)),\no\\&\qquad\qquad\pi(d(2\alpha+1,k))+\pi( b_{1,1}(k, n))\Big\}\no\\
		&\geq\alpha+1+\left\lfloor\frac{5 n+3}{3}\right\rfloor \no.
	\end{align}
	Thus the result holds for $L_{2\alpha}$. This proves Theorem \ref{l2al} by induction.
\end{proof}

\noindent {\bf Acknowledgements}
This work is supported by NSFC (National Natural Science
Foundation of China) through Grants NSFC 12071331.

\bigskip

\noindent {\bf Data Availability}
All data generated during this study are included in the published article.

\bigskip

\noindent {\bf  Declarations}

\noindent {\bf   Conflict of interest}
The authors declare that they have no known competing financial interests or personal
relationships that could have appeared to influence the work reported in this paper.

\end{document}